\newtheorem{thm}{Theorem}[section]
\newtheorem{lem}[thm]{Lemma}
\newtheorem{rem}[thm]{Remark}
\theoremstyle{definition}
\newtheorem{defn}{Definition}[section]
\newcommand{\scr}[1]{\mathscr #1}
\definecolor{wco}{rgb}{0.5,0.2,0.3}
\numberwithin{equation}{section} \theoremstyle{remark}
\newcommand{\ua}{\uparrow}
\title{{\bf   Approximations of  McKean-Vlasov SDEs with Irregular Coefficients}\footnote{Supported in
 part by  NNSFC (11801406).} }
\author{
{\bf  Jianhai Bao$^{b)}$,  Xing Huang$^{a)}$}\\
\footnotesize{$^{a)}$Center for Applied Mathematics, Tianjin
University, Tianjin 300072, China}\\
\footnotesize{  xinghuang@tju.edu.cn}\\
\footnotesize{$^{b)}$Department of Mathematics, Swansea University,
Singleton Park, SA2 8PP, UK}\\
\footnotesize{Jianhai.Bao@Swansea.ac.uk}}
\begin{document}
\allowdisplaybreaks
\def\R{\mathbb R}  \def\ff{\frac} \def\ss{\sqrt} \def\B{\mathbf
B} \def\W{\mathbb W}
\def\N{\mathbb N} \def\kk{\kappa} \def\m{{\bf m}}
\def\ee{\varepsilon}\def\ddd{D^*}
\def\dd{\delta} \def\DD{\Delta} \def\vv{\varepsilon} \def\rr{\rho}
\def\<{\langle} \def\>{\rangle} \def\GG{\Gamma} \def\gg{\gamma}
  \def\nn{\nabla} \def\pp{\partial} \def\E{\mathbb E}
\def\d{\text{\rm{d}}} \def\bb{\beta} \def\aa{\alpha} \def\D{\scr D}
  \def\si{\sigma} \def\ess{\text{\rm{ess}}}
\def\beg{\begin} \def\beq{\begin{equation}}  \def\F{\scr F}
\def\Ric{\text{\rm{Ric}}} \def\Hess{\text{\rm{Hess}}}
\def\e{\text{\rm{e}}} \def\ua{\underline a} \def\OO{\Omega}  \def\oo{\omega}
 \def\tt{\tilde} \def\Ric{\text{\rm{Ric}}}
\def\cut{\text{\rm{cut}}} \def\P{\mathbb P} \def\ifn{I_n(f^{\bigotimes n})}
\def\C{\scr C}      \def\aaa{\mathbf{r}}     \def\r{r}
\def\gap{\text{\rm{gap}}} \def\prr{\pi_{{\bf m},\varrho}}  \def\r{\mathbf r}
\def\Z{\mathbb Z} \def\vrr{\varrho} \def\ll{\lambda}
\def\L{\scr L}\def\Tt{\tt} \def\TT{\tt}\def\II{\mathbb I}
\def\i{{\rm in}}\def\Sect{{\rm Sect}}  \def\H{\mathbb H}
\def\M{\scr M}\def\Q{\mathbb Q} \def\texto{\text{o}} \def\LL{\Lambda}
\def\Rank{{\rm Rank}} \def\B{\scr B} \def\i{{\rm i}} \def\HR{\hat{\R}^d}
\def\to{\rightarrow}\def\l{\ell}\def\iint{\int}
\def\EE{\scr E}\def\Cut{{\rm Cut}}
\def\A{\scr A} \def\Lip{{\rm Lip}}
\def\BB{\scr B}\def\Ent{{\rm Ent}}\def\L{\scr L}
\def\R{\mathbb R}  \def\ff{\frac} \def\ss{\sqrt} \def\B{\mathbf
B}
\def\N{\mathbb N} \def\kk{\kappa} \def\m{{\bf m}}
\def\dd{\delta} \def\DD{\Delta} \def\vv{\varepsilon} \def\rr{\rho}
\def\<{\langle} \def\>{\rangle} \def\GG{\Gamma} \def\gg{\gamma}
  \def\nn{\nabla} \def\pp{\partial} \def\E{\mathbb E}
\def\d{\text{\rm{d}}} \def\bb{\beta} \def\aa{\alpha} \def\D{\scr D}
  \def\si{\sigma} \def\ess{\text{\rm{ess}}}
\def\beg{\begin} \def\beq{\begin{equation}}  \def\F{\scr F}
\def\Ric{\text{\rm{Ric}}} \def\Hess{\text{\rm{Hess}}}
\def\e{\text{\rm{e}}} \def\ua{\underline a} \def\OO{\Omega}  \def\oo{\omega}
 \def\tt{\tilde} \def\Ric{\text{\rm{Ric}}}
\def\cut{\text{\rm{cut}}} \def\P{\mathbb P} \def\ifn{I_n(f^{\bigotimes n})}
\def\C{\scr C}      \def\aaa{\mathbf{r}}     \def\r{r}
\def\gap{\text{\rm{gap}}} \def\prr{\pi_{{\bf m},\varrho}}  \def\r{\mathbf r}
\def\Z{\mathbb Z} \def\vrr{\varrho} \def\ll{\lambda}
\def\L{\scr L}\def\Tt{\tt} \def\TT{\tt}\def\II{\mathbb I}
\def\i{{\rm in}}\def\Sect{{\rm Sect}}  \def\H{\mathbb H}
\def\M{\scr M}\def\Q{\mathbb Q} \def\texto{\text{o}} \def\LL{\Lambda}
\def\Rank{{\rm Rank}} \def\B{\scr B} \def\i{{\rm i}} \def\HR{\hat{\R}^d}
\def\to{\rightarrow}\def\l{\ell}
\def\8{\infty}\def\I{1}\def\U{\scr U}
\maketitle

\begin{abstract}
 The goal of this paper is to approximate several kinds of {\it
McKean-Vlasov SDEs} with {\it irregular
 coefficients} via weakly interacting particle systems.  More precisely,  propagation of chaos and  convergence rate of Euler-Maruyama
 scheme associated with  the consequent weakly interacting
 particle systems  are
 investigated for McKean-Vlasov SDEs, where (i) the
diffusion terms are H\"older continuous by taking advantage of
Yamada-Watanabe's approximation
 approach and (ii) the drifts  are H\"older continuous by freezing distributions followed by invoking Zvonkin's
 transformation trick.

\end{abstract} \noindent
 AMS subject Classification:\  65C05, 65C30, 65C35.   \\
\noindent
 Keywords: McKean-Vlasov SDE,  Yamada-Watanabe
 approximation, Zvonkin's transformation, H\"{o}lder continuity
 \vskip 2cm

\section{Introduction and Main Results}
The pioneer work on McKean-Vlasov SDEs whose coefficients are
dependent on   laws of the solutions is initiated in \cite{Mc}. In
terminology, McKean-Vlasov SDEs are also referred to as
distribution-dependent SDEs or mean-field   SDEs, which are derived
as a limit of interacting diffusions. Since McKean's work,
McKean-Vlasov SDEs have been applied extensively in   stochastic
control, queue systems, mathematical finance, multi-factor
stochastic volatility and hybrid models, to name a few; see, for
example, \cite{BLM,CD}. So far, McKean-Vlasov SDEs have been
investigated considerably e.g. on wellposedness \cite{CD,SZ},
ergodicity \cite{EGZ,W19}, Feyman-Kac Formulae \cite{BLPR,CM,W19},
Harnack inequalities \cite{HW,Wangb}.

In general, McKean-Vlasov SDEs cannot be solved explicitly so it is
desirable  to devise implementable numerical algorithms so that they
can be simulated. With contrast to the standard SDEs, the primary
challenge to simulate McKean-Vlasov SDEs lies in approximating the
distributions at each step. At present, there exist a few of results
on numerical approximations for McKean-Vlasov SDEs; see e.g.
\cite{ABRS,BF,CST,DEG,Mal,ST}. In particular, \cite{DEG} is
concerned with strong convergence of tamed Euler-Maruyama (EM for
short) scheme for McKean-Vlasov SDEs, where the drift terms are of
superlinear growth, and \cite{CST,ST} are devoted to weak
convergence for EM algorithms.  The strong convergence of numerical
algorithms for {\it McKean-Vlasov} SDEs with {\it irregular
coefficients} is rather scarce although there are plenty of results
on convergence of numerical approximations for standard SDEs with
irregular coefficients, see e.g. \cite{BHY,GR,NT}. Nevertheless, in
the present work we intend to go further and aim to investigate
strong convergence of EM scheme associated with several class of
{\it McKean-Vlasov SDEs with irregular coefficients}.

Next we start with some notations. Let $\mathscr{P}(\R^d)$ be the
collection of all probability measures on $\R^d$. For   $p>0$, if
$\mu\in\mathscr{P}(\R^d)$ enjoys finite $p$-th moment, i.e.,
$\mu(|\cdot|^p):=\int_{\R^d}|x|^p\mu(\d x)<\8$, we then formulate
$\mu\in\mathscr{P}_p(\R^d)$.  For $\mu,\nu\in\mathscr{P}_p(\R^d)$,
$p>0,$ the $\mathbb{W}_p$-Wasserstein distance between $\mu$ and
$\nu$ is defined by
\begin{equation*}
\mathbb{W}_p(\mu,\nu)=\inf_{\pi\in\mathcal
{C}(\mu,\nu)}\Big(\int_{\R^d\times\R^d}|x-y|^p\pi(\d x,\d y)\Big)^{\ff{1}{1\vee
p}},
\end{equation*}
where $\mathcal {C}(\mu,\nu)$ stands for the set of all couplings of
$\mu$ and $\nu$. Let $\delta_x$ be  Dirac's delta measure centered
at the point $x\in\R^d.$ As for a random variable $\xi,$ its law is
written by $\mathscr{L}_\xi$. For any $t\ge0,$ let $C([0,t];\R^d)$
be the set of all continuous functions $f:[0,t]\to\R^d$ endowed with
the uniform norm $\|f\|_{\8,t}:=\sup_{0\le s\le t}|f(s)|$. $\lfloor
a\rfloor$ stipulates the integer part of $a\ge0.$

Consider the following McKean-Vlasov SDE on $\R$ \beq\label{E1} \d
X_t= b(X_t, \mu_t)\d t +\si(X_t)\d W_t,~~~t\ge0,~~~X_0=\xi,
\end{equation}
where $\mu_t:=\mathscr{L}_{X_t}$ stands for the law of $X$ at time
$t,$ $b:=b_1+b_2,$ $b_i:\R\times\scr P(\R)\rightarrow\R$, $i=1,2$,
$\si:\R\rightarrow\R$, and $(W_t)_{t\ge0}$ is a $1$-dimensional
Brownian motion on a complete filtration probability space
$(\OO,\F,(\F_t)_{t\ge0},\P)$.

Now we introduce the definition of strong solution to \eqref{E1},
which is standard in literature; see e.g. \cite[Definition
1.1]{Wangb}.

 {\begin{defn}\label{def1} A continuous adapted process $(X_t)_{t\geq
0}$ on $\mathbb{R}$ is called a (strong) solution of \eqref{E1}, if
$$\int_0^t\E\left(|b(X_s,\mu_s)|+|\si(X_s)|^2\right)\d s<\infty, \ \ t\geq 0,$$
and $\P$-a.s.
$$X_t=X_0+\int_0^tb(X_s,\mu_s)\d s+\int_0^t\si(X_s)\d W_s, ~~~~t\ge0.$$
\end{defn}
\begin{rem}\label{Rem1}
By BDG's inequality, Definition \ref{def1} yields $\E\Big(\sup_{0\le
s\le t}|X_s|\Big)<\infty$ if $\E|X_0|<\infty$.
\end{rem}

With regard to the coefficients of \eqref{E1}, we assume that
\begin{enumerate}
\item[({\bf H1})]  For fixed $\mu\in\scr P(\R)$, $x\mapsto b_1(x,\mu)$ is continuous and non-increasing,
and there exist   $K_1>0$    and $ \beta\in(0,1]$ such that, for
$x,y\in\R$ and $\mu,\nu\in\scr P_1(\R)$,
\begin{equation}\label{b1}
    |b_1(x,\mu)-b_1(x,\nu)|\le K_1\W_1(\mu,\nu),  \  |b_1(x,\mu)-b_1(y,\mu)|\leq
    K_1|x-y|^\bb,
\end{equation}
\begin{equation}\label{EQ1}
|b_2(x,\mu)-b_2(y,\nu)|\le K_1(|x-y|+\mathbb{W}_1(\mu,\nu)).
\end{equation}

\item[({\bf H2})] There exist constants
$K_2>0$ and $\aa\in[\ff{1}{2},1]$ such that  $ |\si(x)-\si(y)|\le
K_2|x-y|^{\alpha}, x,y\in\R. $
\end{enumerate}

The theorem below addresses the strong wellposedness of \eqref{E1}.

\begin{thm} \label{Existence} Assume that {\bf(H1)} and {\bf(H2)} hold. Then, for   $X_0=\xi\in\F_0$ with $\L_\xi\in\scr P_p(\R)$, $p\ge2$, \eqref{E1}
has a unique strong solution $(X_t^\xi)_{t\ge0}$ with the initial
value $X_0^\xi=\xi$ such that
\begin{equation}\label{eq3}
\E\Big(\sup_{0\le t\le T}|X_t^\xi|^p\Big)\le C_T(1+\E|\xi|^p)
\end{equation}
for some constant $C_T>0.$
\end{thm}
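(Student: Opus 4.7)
The plan is to combine a Picard-style freezing of the measure argument with Yamada--Watanabe's approximation technique. Fix $T>0$. For any flow $\mu_\cdot\in C([0,T];\scr P_p(\R))$ with $\mu_0=\L_\xi$, freeze the distribution argument to obtain the one-dimensional SDE
\beq\label{eq:frozen}
\d X_t^\mu=[b_1(X_t^\mu,\mu_t)+b_2(X_t^\mu,\mu_t)]\d t+\si(X_t^\mu)\d W_t,\qquad X_0^\mu=\xi.
\end{equation}
Here $x\mapsto b_1(x,\mu_t)$ is continuous, non-increasing and $\bb$-H\"older, $x\mapsto b_2(x,\mu_t)$ is Lipschitz, and $\si$ is $\aa$-H\"older with $\aa\ge 1/2$, while \eqref{b1}--\eqref{EQ1} yield sublinear growth of $b(\cdot,\mu_t)$ in $x$ controlled by $\mu_t(|\cdot|)$. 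Skorohod's classical construction supplies weak existence, and the one-dimensional Yamada--Watanabe pathwise-uniqueness theorem (applicable here since the drift is monotone plus Lipschitz in $x$ and the diffusion is $\tfrac12$-H\"older) yields a unique strong solution $(X_t^\mu)_{t\in[0,T]}$. It\^o's formula applied to $|X_t^\mu|^p$, combined with BDG and Gronwall, produces a moment bound of the form
\[
\E\Big(\sup_{0\le s\le T}|X_s^\mu|^p\Big)\le C_T\Big(1+\E|\xi|^p+\int_0^T\mu_s(|\cdot|^p)\,\d s\Big).
\]

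Next, set $\Phi(\mu)_t:=\L_{X_t^\mu}$. A short bootstrap on the moment bound shows that for $R$ large enough $\Phi$ preserves the complete metric space
\[
\scr A_T:=\Big\{\mu\in C([0,T];\scr P_1(\R)):\mu_0=\L_\xi,\ \sup_{0\le t\le T}\mu_t(|\cdot|^p)\le R\Big\},
\]
endowed with $d_{T,\ll}(\mu,\nu):=\sup_{0\le t\le T}e^{-\ll t}\W_1(\mu_t,\nu_t)$. To obtain strict contraction for $\ll$ large, couple $X^\mu$ and $X^\nu$ through the common initial value $\xi$ and Brownian motion $W$, and apply It\^o's formula to $\phi_\vv(|X_t^\mu-X_t^\nu|)$ with $\phi_\vv$ the standard Yamada--Watanabe approximation of $|\cdot|$. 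Three terms arise: (i) the quadratic-variation term is dominated by $\tfrac12\phi_\vv''(|X^\mu_t-X^\nu_t|)K_2^2|X^\mu_t-X^\nu_t|^{2\aa}\le C/\log(1/\vv)$, which vanishes as $\vv\downarrow 0$ since $\aa\ge 1/2$; (ii) the $b_1$-in-$x$ contribution is non-positive by the non-increasing monotonicity of $b_1(\cdot,\mu_t)$, and is therefore discarded; (iii) the remaining pieces, namely $b_1$-in-$\mu$, $b_2$-in-$x$ and $b_2$-in-$\mu$, produce via \eqref{b1}--\eqref{EQ1} a bound of the form $C(\E|X_t^\mu-X_t^\nu|+\W_1(\mu_t,\nu_t))$. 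Letting $\vv\downarrow 0$ and applying Gronwall yields
\[
\E|X_t^\mu-X_t^\nu|\le C_T\int_0^t\W_1(\mu_s,\nu_s)\,\d s,\qquad t\in[0,T],
\]
whence $\W_1(\Phi(\mu)_t,\Phi(\nu)_t)\le\E|X_t^\mu-X_t^\nu|$ gives $d_{T,\ll}(\Phi(\mu),\Phi(\nu))\le (C_T/\ll)\,d_{T,\ll}(\mu,\nu)$, i.e.\ contraction for $\ll>C_T$. The unique fixed point of $\Phi$ is the desired strong solution of \eqref{E1}, and \eqref{eq3} follows by specialising the frozen moment bound to $\mu_t=\L_{X_t^\xi}$ and absorbing the $\mu_s(|\cdot|^p)$ term via another Gronwall step.

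The main obstacle is the simultaneous roughness of $\si$ (only $\aa$-H\"older) and $b_1(\cdot,\mu)$ (only $\bb$-H\"older but monotone): a direct It\^o estimate on either $|X^\mu-X^\nu|$ or $|X^\mu-X^\nu|^2$ fails to close, so the comparison must be routed through Yamada--Watanabe's smoothing $\phi_\vv$. Within that framework the H\"older-in-$x$ part of $b_1$ cannot be absorbed by a Lipschitz estimate; it must instead be dropped outright via $\mathrm{sgn}(X^\mu-X^\nu)[b_1(X^\mu,\mu_t)-b_1(X^\nu,\mu_t)]\le 0$, which is precisely why the non-increasing monotonicity of $b_1$ in $x$ is imposed in \textbf{(H1)}. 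Balancing the blow-up of $\phi_\vv''$ against the H\"older exponent of $\si$ is what pins down the threshold $\aa\ge 1/2$, and removing either the monotonicity or this threshold would collapse the argument.
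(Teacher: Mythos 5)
Your proposal is correct and follows essentially the same route as the paper: the paper runs the Picard iteration $\d X^{(k)}_t=b(X^{(k)}_t,\mu^{(k-1)}_t)\d t+\si(X^{(k)}_t)\d W_t$ on the frozen law explicitly and shows the iterates are Cauchy via the Yamada--Watanabe function $V_{\gg,\vv}$, which is precisely your contraction estimate for $\Phi$ repackaged as geometric decay of successive differences. All the key ingredients match: wellposedness of the frozen equation from the one-dimensional Yamada--Watanabe theory, dropping the $b_1$-in-$x$ term by monotonicity, balancing $V''_{\gg,\vv}$ against the $\aa$-H\"older diffusion with $\aa\ge\ff12$, and the stopping-time/BDG/Gronwall argument for the $p$-th moment bound \eqref{eq3}.
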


Existence and uniqueness of McKean-Vlasov SDEs with regular
coefficients have been investigated extensively; see e.g.
\cite{BMP,CD,MV,SZ, Wangb}. Meanwhile, the strong wellposedness of
McKean-Vlasov SDEs with irregular coefficients has also received
much attention; see, for example, \cite{Ch,HW}, where, in \cite{Ch},
the dependence of laws  is of integral type and the diffusion is
non-degenerate, and \cite{HW} is concerned with the integrability
condition but excluding   linear growth of the drift. For weak
wellposedness of McKean-Vlasov SDEs, we refer to e.g.
\cite{HW,LM,MS,MV}. Whereas Theorem \ref{Existence} shows that the
McKean-Vlasov SDE  we are interested in is strongly wellposed
although both the drift term and the diffusion term are irregular in
certain sense.

Since \eqref{E1} is distribution-dependent, we exploit the
stochastic interacting particle systems to approximate it. Let
$N\ge1$ be an integer and $(X_0^i,W^i_t)_{1\le i\le N}$ be i.i.d.
copies of $(X_0,W_t).$ Consider the following stochastic
non-interacting particle systems
\begin{equation}\label{C4}
\d X_t^i=b(X_t^i,\mu_t^i)\d t+\si(X_t^i)\d
W_t^i,~~~t\ge0,~~~i\in\mathcal {S}_N:=\{1,\cdots,N\}
\end{equation}
with $\mu_t^i:=\mathscr{L}_{X_t^i}$. By the weak uniqueness due to
Theorem \ref{Existence}, we have $\mu_t=\mu_t^i, i\in\mathcal
{S}_N.$ Let $ \tt\mu_t^N $ be the empirical distribution associated
with $X_t^1,\cdots,X_t^N$, i.e.,
\begin{equation}\label{H1}
\tt\mu_t^N =\ff{1}{N}\sum_{j=1}^N\dd_{X_t^j}.
\end{equation} Moreover, we need
to consider the so-called stochastic $N$-interacting particle
systems:
\begin{equation}\label{eq4}
\d X_t^{i,N}=b(X_t^{i,N},\hat\mu_t^N)\d t+\si(X_t^{i,N})\d
W_t^i,~t\ge0,~X_0^{i,N}=X_0^i,~i\in\mathcal {S}_N,
\end{equation}
where  $\hat\mu_t^N$ means the empirical distribution corresponding
to  $X_t^{1,N},\cdots,X_t^{N,N}$, namely,
\begin{equation*}
 \hat\mu_t^N :=\ff{1}{N}\sum_{j=1}^N\dd_{X_t^{j,N}}.
 \end{equation*}
We remark that particles $(X^i)_{i\in\mathcal {S}_N}$ are mutually
independent and that particles $(X^{i,N})_{i\in\mathcal {S}_N}$ are
interacting and are not independent. Furthermore, under {\bf(H1)}
and {\bf(H2)}, the stochastic $N$-interacting particle systems
\eqref{eq4} are strongly wellposed; see Lemma \ref{lem} below for
more details.

To discretize \eqref{eq4} in time, we introduce the continuous time
EM scheme defined as below: for any  $\dd\in(0,\e^{-1}),$
\begin{equation}\label{C3}
\d X_t^{\dd,i,N}=b(X_{t_\dd}^{\dd,i,N},\hat\mu_{t_\dd}^{\dd,N})\d
t+\si(X_{t_\dd}^{\dd,i,N})\d
W_t^i,~~~t\ge0,~~~X_0^{\dd,i,N}=X_0^{i,N},
\end{equation}
where $t_\dd:=\lfloor t/\dd\rfloor\dd$  and
 $$\hat\mu_{k\dd}^{\dd,N} :=\ff{1}{N}\sum_{j=1}^N\dd_{X_{k\dd}^{\dd,j,N}},~~~~k\ge0.$$

The following result states  that the continuous time EM scheme
corresponding to stochastic interacting particle systems converges
strongly to the non-interacting particle system whenever  the
particle number goes to infinity and the stepsize approaches to zero
and moreover provides the convergence rate.

\begin{thm}\label{th2}
 Assume {\bf(H1)}
and {\bf(H2)} hold and suppose further $\mathscr{L}_{X_0}\in  \scr
P_p(\R)$ for some $p>4$.  Then, for any $T>0$, there exists a
constant $C_T>0$ such that
\begin{equation}\label{F3}
\sup_{i\in\mathcal {S}_N}\E\Big(\sup_{0\le t\le
T}|X_t^i-X_t^{\dd,i,N}|\Big)\le C_T
\begin{cases}
N^{-\ff{1}{8}}+
 \Big(\ff{1}{ \ln\ff{1}{\dd}}\Big)^{1/2},~~~~~~~~~~~~~~~~~~\aa=\ff{1}{2}\\
 N^{-\ff{2\aa-1}{4}}+ \dd^{\ff{(2\aa-1)^2}{2}}+\dd^{\ff{\bb(2\aa-1)}{2}}
 ,~~~\aa\in(\ff{1}{2},1]
\end{cases}
\end{equation}
and
\begin{equation}\label{F4}
\sup_{i\in\mathcal {S}_N}\E\Big(\sup_{0\le t\le
T}|X_t^i-X_t^{\dd,i,N}|^2\Big)\le C_T
\begin{cases}
N^{-\ff{1}{4}}+ \ff{1}{ \ln\ff{1}{\dd} },~~~~~~~~~~~~~\aa=\ff{1}{2}\\
N^{-\ff{1}{4}}+\dd^{2\aa-1}+\dd^\bb,~~~~ \aa\in(\ff{1}{2},1)\\
N^{-\ff{1}{4}}+\dd+\dd^\bb,~~~~~~~~~~ \aa=1.
\end{cases}
\end{equation}
\end{thm}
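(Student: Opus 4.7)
The plan is to bound the error $X_t^i-X_t^{\delta,i,N}$ by the triangle decomposition
\[
X_t^i-X_t^{\delta,i,N}=\big(X_t^i-X_t^{i,N}\big)+\big(X_t^{i,N}-X_t^{\delta,i,N}\big),
\]
so the task splits into (A) a \emph{propagation of chaos} estimate comparing the non-interacting system \eqref{C4} with the continuous-time interacting system \eqref{eq4}, and (B) a \emph{discretization} estimate comparing \eqref{eq4} with its EM scheme \eqref{C3}. By exchangeability of the particles, both quantities are independent of $i\in\mathcal S_N$, so one can work with $i=1$ throughout.

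For both (A) and (B), I would apply Itô's formula to $\phi_\vv(Z_t)$, where $Z_t$ is the relevant difference and $\phi_\vv$ is the Yamada--Watanabe mollifier of $|\cdot|$: a smooth non-negative function with $|\phi_\vv(x)-|x||\le C\vv$, $|\phi_\vv'|\le 1$, and, crucially, $\phi_\vv''(x)|x|^{2\alpha}$ controlled by $\frac{C}{\ln(1/\vv)}$ when $\alpha=\tfrac12$ and by $C\vv^{2\alpha-1}$ when $\alpha\in(\tfrac12,1]$. Expanding via Itô, the diffusion quadratic-variation term is handled by {\bf(H2)} and the above bound on $\phi_\vv''(z)|z|^{2\alpha}$; the $b_1$-drift contribution is absorbed by monotonicity in the spatial variable (which gives $\phi_\vv'(x-y)(b_1(x,\mu)-b_1(y,\mu))\le 0$), leaving only the distributional Lipschitz estimate in \eqref{b1}; the $b_2$-drift contribution is immediately controlled by \eqref{EQ1}. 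Taking suprema and expectations and invoking BDG one arrives at a Gronwall inequality of the schematic form
\[
\E\!\sup_{s\le t}|Z_s|\le C\vv+C\int_0^t\E\!\sup_{r\le s}|Z_r|\,\d s+C\,R(N,\dd,\vv),
\]
where $R$ collects the genuine error sources.

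For (A), the residual term $R$ is $\sup_{s\le T}\E\W_1(\tt\mu_s^N,\mu_s)$, and since $p>4$ and we are on $\R$, the Fournier--Guillin empirical-measure bound yields $R\lesssim N^{-1/2}$; optimizing $\vv$ against the $\vv^{2\alpha-1}$ (resp.\ $1/\ln(1/\vv)$) factors picked up from the diffusion then produces the $N^{-(2\alpha-1)/4}$ (resp.\ $N^{-1/8}$) rate. For (B), the residual is driven by $\E|X_s^{\dd,i,N}-X_{s_\dd}^{\dd,i,N}|^q$; standard moment estimates on the EM interpolate give $\lesssim\dd^{q/2}$, which yields a $\dd^{\beta/2}$ contribution from the Hölder drift (via \eqref{b1}) and, after raising everything to the second moment and balancing $\vv$ versus $\dd$, a $\dd^{2\alpha-1}$ contribution from the diffusion (logarithmic when $\alpha=\tfrac12$). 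For the first-moment bound \eqref{F3} the same scheme is run directly on $\phi_\vv(Z_t)$ rather than $\phi_\vv^2$, so an additional factor $\tfrac{2\alpha-1}{2}$ appears in the exponent and explains the unusual rates $\dd^{(2\alpha-1)^2/2}$ and $\dd^{\beta(2\alpha-1)/2}$.

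The principal obstacle is the Yamada--Watanabe parameter $\vv$: it must be chosen simultaneously to control the diffusion Itô term, the time-increment $\E|X_s^\dd-X_{s_\dd}^\dd|^\alpha$, and the $\vv$-residual itself. When $\alpha=\tfrac12$ only a logarithmic gain is available, forcing the $(\ln(1/\dd))^{-1/2}$ rate; when $\alpha>\tfrac12$ the optimization is polynomial but its interaction with the Hölder-$\beta$ drift and with the two different target moments $p=1,2$ requires separate tuning of $\vv$ in each case, which is what produces the three distinct regimes in \eqref{F4} and the fractional exponents in \eqref{F3}. Everything else reduces to Gronwall and the moment bound \eqref{eq3} from Theorem~\ref{Existence} (together with its analogue for the particle systems, established in Lemma~\ref{lem}).
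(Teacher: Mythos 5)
Your proposal follows essentially the same route as the paper: the same triangle decomposition into a propagation-of-chaos estimate (Lemma \ref{pro1}) and a discretization estimate (Lemma \ref{pro2}), both treated with the Yamada--Watanabe mollifier, the monotonicity of $b_1$, BDG's inequality and Gronwall's lemma. One small correction: the exponents $N^{-1/8}$ and $N^{-(2\alpha-1)/4}$ do not arise from tuning $\vv$ against the diffusion terms (in the chaos estimate one simply lets $\vv\downarrow 0$), but from the BDG/Young treatment of the martingale term applied to the pointwise rate $\sup_{t\le T}\E|X_t^i-X_t^{i,N}|\le C\,N^{-1/4}$ --- the paper uses the $N^{-1/4}$ empirical-measure rate of \cite[Theorem 5.8]{CD} rather than the $N^{-1/2}$ you quote, which would otherwise be inconsistent with the stated $N^{-1/8}$.
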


The assumption  on the $p$-th moment of the initial value is set to
ensure that Glivenko-Cantelli convergence under the Wasserstein
distance (see e.g. \cite[Theorem 5.8]{CD})  is available. According
to Theorem \ref{th2}, it is preferable to measure the convergence
between the non-interacting particle systems and the continuous time
EM scheme of the corresponding stochastic interacting particle
systems in a lower order moment. Moreover, Theorem \ref{th2} extends
\cite{BHY,GR} to McKean-Vlasov SDEs with H\"older continuous
diffusions.

In the preceding section, we focus mainly  on McKean-Vlasov SDEs,
where, in particular,  the diffusion term is H\"older continuous. We
now move forward to consider McKean-Vlasov SDEs, in which the drift
coefficients are allowed to be H\"older continuous w.r.t. the
spatial variables and Lipschitz in law. In the sequel, we are still
interested in \eqref{E1} but for the multidimensional setting. More
precisely, for $d\geq 1$, we work on the following McKean-Vlasov SDE
\begin{equation}\label{C2}
\d X_t=b(X_t,\mu_t)\d t+\si(X_t)\d W_t, ~~~~t\ge0, ~~X_0=\xi,
\end{equation}
where $b:\R^d\times\mathscr{P}(\R^d)\to\R^d,$
$\si:\R^d\to\R^d\otimes\R^d$, and $(W_t)_{t\ge0}$ is a
$d$-dimensional Brownian motion on some complete filtration  probability space
$(\OO,\F,(\F_t)_{t\ge0},\P)$.

Concerning \eqref{C2}, we assume that for any $x,y\in\R^d$ and
$\mu,\nu\in\mathscr{P}_1(\R^d),$
\begin{enumerate}
\item[{\bf (A1)}]  $\si\in C^1(\R^d;\R^d\otimes\R^d)$,  $\si(x)$ is invertible, and
\begin{equation}\label{A5}
\|b\|_\8+\|\si \|_\8+\|\nn\si \|_\8+\|\si^{-1}\|_\8<\8,
\end{equation}
where $\nn$ denotes   gradient operator.
\item[{\bf (A2)}] There exist   constants $K>0,\aa\in(0,1]$ such that
\begin{equation}\label{C1}
|b(x,\mu)-b(y,\nu)|\le K\{|x-y|^\aa+\mathbb{W}_1(\mu,\nu)\}.
\end{equation}
\end{enumerate}

Note that, by ({\bf A2}),  the drift $b$ is at most of linear
growth, i.e., there exists a constant $C>0$ such that
\begin{equation*}
|b(x,\mu)|\le
C(1+|x|+\mathbb{W}_1(\mu,\dd_{{\bf0}})),~~~~x\in\R^d,~~\mu\in\mathscr{P}_1(\R^d),
\end{equation*}
and that the diffusion $\si$ is uniformly bounded and nondegenerate
due to ({\bf A1}). Whence, by virtue of \cite[Theorems 2.3 \&
2.7]{BMP} (see also \cite[Theorem 3.2]{LM}), under ({\bf A1}) and
({\bf A2}), \eqref{C2} has a unique weak solution $(\bar\OO,
\bar\F,\bar\P,\bar W, \bar X)$. Let
$b_t^{\bar\mu}(x)=b(x,\bar\mu_t)$ with $\bar\mu_t:=\mathscr{L}_{\bar
 X_t}|\bar\P$, the law of $\bar X_t$ under $\bar\P$. On the other hand, in the light of
 \cite[Theorem 1.1]{Ch0} in the non-degenerate case, the
time-dependent SDE
\begin{equation}\label{SEU}
\d X_t= b_t^{\bar\mu}(X_t)\d t+\si(X_t)\d W_t,~~t\ge0
\end{equation}
admits a unique strong solution under ({\bf A1}) and ({\bf A2}).
 Hence, we conclude that \eqref{C2} enjoys a unique strong solution.
In fact, the weak existence of \eqref{C2} plus  the strong existence
and uniqueness of \eqref{SEU} implies the strong existence of
\eqref{C2} by \cite[Lemma 3.4]{HW}. By the weak uniqueness of
\eqref{C2}, the strong uniqueness of \eqref{C2} is equivalent to
that of \eqref{SEU}. By following the same trick in the proof of
Lemma \ref{lem}, we conclude that \eqref{eq4} is strongly wellposed
under ({\bf A1}) and ({\bf A2}). In what follows, we emphasize that
the stochastic $N$-interacting particle systems and the
corresponding EM scheme associated with \eqref{C2} still solve
\eqref{eq4} and \eqref{C3}, respectively, but for the
multidimensional setup.

Another contribution in present paper is concerned with strong
convergence between non-interacting particle systems and continuous
time EM scheme of stochastic interacting particle systems
corresponding to McKean-Vlasov SDEs, where the drift is singular
w.r.t. the spatial variable.

\begin{thm}\label{th3}
Assume $({\bf A1})$ and $({\bf A2})$ hold and  suppose further
$\mathscr{L}_{X_0}\in  \scr P_p(\R^d)$ for some $p>4$.  Then, for
any $T>0$, there exists a constant $C_T>0$ such that
\begin{equation}\label{A6}
\sup_{i\in\mathcal {S}_N}\E\Big(\sup_{0\le t\le
T}|X_t^i-X_t^{\dd,i,N}|^2\Big)\le C_T
\begin{cases}
N^{-\ff{1}{2}}+ \dd^\aa,~~~~~~~~~~~~~d<4\\
N^{-\ff{1}{2}}\log N+\dd^\aa,~~~~~~ d=4\\
N^{-\ff{2}{d}}+\dd^\aa,~~~~~~~~~~~~~d>4.
\end{cases}
\end{equation}
\end{thm}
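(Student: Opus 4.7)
The plan is to combat the H\"older singularity of $b$ via Zvonkin's transformation, which recasts \eqref{C2} into an SDE with essentially Lipschitz coefficients. Let $\mu_t=\mathscr{L}_{X_t^i}$ denote the common marginal of the non-interacting particles and set $b_t^{\mu}(x):=b(x,\mu_t)$; by ({\bf A2}), $b_t^{\mu}$ is bounded and $\aa$-H\"older in $x$ uniformly in $t\in[0,T]$. For $\ll>0$ sufficiently large, I would consider the backward parabolic system on $[0,T]\times\R^d$,
\begin{equation*}
\pp_t u+\ff{1}{2}\mathrm{tr}(\si\si^*\nn^2 u)+(\nn u)b_t^{\mu}-\ll u+b_t^{\mu}=0,\qquad u(T,\cdot)=0,
\end{equation*}
read componentwise. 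Since $\si\in C_b^{1}$ is uniformly non-degenerate by ({\bf A1}) and $b_t^{\mu}$ lies in $C_b^{\aa}$ in $x$, classical Schauder theory yields a unique solution $u\in C_b^{1,2+\aa}$ with $\nn^2 u$ bounded and $\aa$-H\"older in $x$; for $\ll$ large enough one has $\|u\|_\8+\|\nn u\|_\8\le\ff12$, so $\Phi_t(x):=x+u(t,x)$ is a bi-Lipschitz $C^{1,2}$ diffeomorphism of $\R^d$ uniformly in $t\in[0,T]$.

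Applying It\^o's formula to $Y_t^i:=\Phi_t(X_t^i)$ and $Y_t^{\dd,i,N}:=\Phi_t(X_t^{\dd,i,N})$ and exploiting the PDE to cancel the drift yields
\begin{equation*}
\d Y_t^i=\ll u(t,X_t^i)\,\d t+(I+\nn u(t,X_t^i))\si(X_t^i)\,\d W_t^i,
\end{equation*}
\begin{equation*}
\d Y_t^{\dd,i,N}=\big[\ll u(t,X_t^{\dd,i,N})+R_t^{i,N}\big]\,\d t+(I+\nn u(t,X_t^{\dd,i,N}))\si(X_{t_\dd}^{\dd,i,N})\,\d W_t^i,
\end{equation*}
where the residual
\begin{equation*}
R_t^{i,N}=(I+\nn u(t,X_t^{\dd,i,N}))\big[b(X_{t_\dd}^{\dd,i,N},\hat\mu_{t_\dd}^{\dd,N})-b(X_t^{\dd,i,N},\mu_t)\big]+\ff12\mathrm{tr}\big([\si\si^*(X_{t_\dd}^{\dd,i,N})-\si\si^*(X_t^{\dd,i,N})]\nn^2 u(t,X_t^{\dd,i,N})\big)
\end{equation*}
collects the discretization errors stemming from the drift freezing at $(X_{t_\dd}^{\dd,i,N},\hat\mu_{t_\dd}^{\dd,N})$ and from the trace term being generated by $\si(X_{t_\dd}^{\dd,i,N})$ rather than $\si(X_t^{\dd,i,N})$. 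Because $\Phi_t$ is bi-Lipschitz, $|Y_t^i-Y_t^{\dd,i,N}|$ and $|X_t^i-X_t^{\dd,i,N}|$ are comparable, so I aim to control $\E\sup_{s\le t}|Y_s^i-Y_s^{\dd,i,N}|^2$. BDG together with the Lipschitz bounds on $u$ and $\nn u$, Lipschitz continuity of $\si$ from ({\bf A1}), and the H\"older-Lipschitz estimate \eqref{C1} then produces
\begin{equation*}
\E\sup_{s\le t}|Y_s^i-Y_s^{\dd,i,N}|^2\le C\int_0^t\E|X_s^i-X_s^{\dd,i,N}|^2\,\d s+C\E\int_0^t\big\{|X_s^{\dd,i,N}-X_{s_\dd}^{\dd,i,N}|^{2\aa}+\W_1(\hat\mu_{s_\dd}^{\dd,N},\mu_s)^2\big\}\,\d s.
\end{equation*}

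The one-step moment bound $\E|X_t^{\dd,i,N}-X_{t_\dd}^{\dd,i,N}|^{q}\le C\dd^{q/2}$ valid for every $q\ge 1$ (direct from boundedness of $b,\si$ and BDG, with Jensen when $q<2$) converts the H\"older-in-$x$ contribution into $O(\dd^\aa)$. For the Wasserstein piece, I would split via the triangle inequality
\begin{equation*}
\W_1(\hat\mu_{s_\dd}^{\dd,N},\mu_s)\le\ff1N\sum_{j=1}^N|X_{s_\dd}^{\dd,j,N}-X_{s_\dd}^j|+\W_1(\tt\mu_{s_\dd}^N,\mu_{s_\dd})+\W_1(\mu_{s_\dd},\mu_s),
\end{equation*}
in which the first summand is reabsorbed into the left-hand side after taking expectation and supremum over $i$ (feedback into Gronwall); the third is $O(\ss\dd)$ by a direct increment estimate using the boundedness of $b$ and $\si$; and the middle term, the pure Glivenko-Cantelli gap, is handled by \cite[Theorem 5.8]{CD}, the hypothesis $\mathscr{L}_{X_0}\in\scr P_p(\R^d)$ with $p>4$ together with \eqref{eq3} delivering exactly the dimension-dependent rates $N^{-1/2}$, $N^{-1/2}\log N$, $N^{-2/d}$ appearing in \eqref{A6}. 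Gronwall's inequality then closes the estimate.

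The principal technical hurdle is establishing the Schauder regularity of $u$ (in particular boundedness and $\aa$-H\"older continuity of $\nn^2 u$ in $x$, uniformly in $t$) under ({\bf A1}) and ({\bf A2}), so that $\Phi_t$ is certifiably a bi-Lipschitz $C^{1,2}$ diffeomorphism and It\^o's formula is fully legitimate; once this is in place, the rest amounts to a systematic blend of the standard strong EM analysis for H\"older-drift SDEs (in the spirit of \cite{BHY,GR}) with the particle-system/Wasserstein bookkeeping furnished by \cite[Theorem 5.8]{CD}.
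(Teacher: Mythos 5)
Your proposal is correct and follows essentially the same route as the paper: Zvonkin's transformation via the backward PDE with the frozen law $\mu_t$, the gradient bounds making $x\mapsto x+u(t,x)$ bi-Lipschitz, It\^o's formula, BDG, the Glivenko--Cantelli rates of \cite[Theorem 5.8]{CD}, and Gronwall. The only (cosmetic) difference is that you compare $X^i$ with $X^{\dd,i,N}$ in a single pass, whereas the paper splits the error into a propagation-of-chaos step (Lemma \ref{pro3}) and an EM-discretization step (Lemma \ref{D1}) and adds the two bounds.
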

\begin{rem}
In \eqref{C2}, we set $\si$  to be independent of distribution
variables merely to be consistent with the framework of \eqref{E1}.
Whereas, by examining  argument of Proposition \ref{pro3} below, the
diffusion term  can be allowed to be distribution-dependent as long
as it is Lipschitz in spatial argument and Lipschitz in law.
Moreover, we remark that, by the standard truncation argument (see
e.g. \cite{BHY}) and stopping time strategy (see e.g. \cite{DEG}),
the uniform boundedness of the drift $b$ can be removed.
\end{rem}

The remainder of this paper is arranged as follows: In Section
\ref{sec1}, the wellposedness of \eqref{E1}  is addressed by
Yamada-Watanabe's approximation; Section \ref{sec2} is devoted to
completing the proof of Theorem \ref{th2} via Yamada-Watanabe's
approach; The last section  aims to finish the proof of Theorem
\ref{th3} by employing Zvonkin's transformation.

\section{Proof of Theorem \ref{Existence}}\label{sec1}
To complete the proofs of Theorems \ref{Existence} and \ref{th2}, we
shall adopt the Yamada-Watanabe approximation approach (see e.g.
\cite{GR,IW}), where the essential ingredient  is to approximate the
function $\R\ni x\mapsto|x|$ in an appropriate manner. For $\gg>1$
and $\vv\in(0,1),$ one trivially has $\int_{\vv/\gg}^\vv\ff{1}{x}\d
x=\ln\gg$ so that there exists a continuous function
$\psi_{\gg,\vv}:\R_+\to\R_+$ with support $[\vv/\gg,\vv]$ such that
\begin{equation*}
0\le\psi_{\gg,\vv}(x)\le \ff{2}{x \ln\gg},~~~x>0, ~~\mbox{ and
}~~\int^\vv_{\vv/\gg} \psi_{\gg,\vv}(x)\d x=1.
\end{equation*}
By a direct calculation, the following mapping
\begin{equation}\label{F8}
\R\ni x\mapsto
V_{\gg,\vv}(x):=\int_0^{|x|}\int_0^y\psi_{\gg,\vv}(z)\d z\d y
\end{equation}
is $C^2$ and satisfies
\begin{equation}\label{R1}
|x|-\vv\le V_{\gg,\vv}(x)\le |x|,~~x\in\R,~~ V_{\gg,\vv}'(x)
\in[0,1], ~x\ge0, ~~~~V_{\gg,\vv}'(x) \in[-1,0], ~x<0
\end{equation}
and
\begin{equation}\label{R2}
 0\le V_{\gg,\vv}''(x)\le
\ff{2}{|x|\ln\gg}{\bf1}_{[\vv/\gg,\vv]}(|x|),~~~~x\in\R.
\end{equation}

With the function $V_{\gg,\vv}$, introduced in \eqref{F8}, in hand,
we are in position to complete
\begin{proof}[{\bf Proof of Theorem \ref{Existence}}]
Below, we fix the time terminal $T>0.$ To obtain  existence of a
solution to \eqref{E1}, for each $k\ge1$, we consider the following
distribution-iterated
 SDE
\begin{equation}\label{E2}
\d X^{(k)}_t=b(X^{(k)}_t,\mu_t^{(k-1)})\d t+\si(X^{(k)}_t)\d
W_t,~~X_t^{(0)}\equiv \xi,~~~t\in[0,T],
\end{equation}
where $\mu_t^{(k)}:=\mathscr{L}_{X^{(k)}_t},k\ge0$. For each fixed
$k\ge1$, according to \cite[Theorem 3.2]{IW}, \eqref{E2} has a
unique solution $(X_t^{(k)})_{t\ge0}$. Moreover,  by a standard
calculation, from {\bf(H1)}, {\bf (H2)} and the fact $\xi\in\scr
P_p(\mathbb{R})$ for some $p\geq 2$, we derive that
\begin{align}\label{EE0}
\sup_{k\geq 1}\E\Big(\sup_{0\le t\le T}|X_t^{(k)}|^2\Big)<\infty.
\end{align}
 For notation brevity, we set
$Z_t^{(k)}:=X^{(k)}_t-X^{(k-1)}_t$ and
$V_\vv:=V_{\e^{\ff{1}{\vv}},\vv}$ (that is, we herein take
$\gg=\e^{\ff{1}{\vv}})$. By It\^o's formula, for any $\ll\ge0$, it
follows that
\begin{equation}\label{EE}
\begin{split}
 \e^{-\ll t}V_\vv(Z_t^{(k+1)})&=-\ll \int_0^t\e^{-\ll s}V_\vv(Z_s^{(k+1)})\d s\\&\quad+\int_0^t\e^{-\ll s}
V'_\vv(Z_s^{(k+1)})\{b(X^{(k+1)}_s,\mu_s^{(k)})-b(X^{(k)}_s,\mu_s^{(k-1)})\} \d s\\
&\quad+\frac{1}{2}\int_0^t\e^{-\ll s}
V''_\vv(Z_s^{(k+1)})(\si(X^{(k+1)}_s)-\si(X^{(k)}_s))^2 \d s\\
&\quad+\int_0^t\e^{-\ll s} V'_\vv(Z_s^{(k+1)}) \{\si(X^{(k+1)}_s)-\si(X^{(k)}_s) \}\d W_s\\
&=:
I_{1,\vv}^\ll(t)+I_{2,\vv}^\ll(t)+I_{3,\vv}^\ll(t)+I_{4,\vv}^\ll(t).
\end{split}
\end{equation}
By virtue of \eqref{R1}, one obviously has
\begin{equation}
\begin{split}
I_{1,\vv}^\ll(t) \le \lambda \vv t-\ll\int_0^t\e^{-\ll s}
|Z_s^{(k+1)}|\d s.
\end{split}
\end{equation}
Furthermore, by   ({\bf H1}) and \eqref{R1}, we deduce  that
\begin{equation}\label{E3}
\begin{split}
I_{2,\vv}^\ll(t) &\le\int_0^t\e^{-\ll s}
V'_\vv(Z_s^{(k+1)})\{b_1(X^{(k+1)}_s,\mu_s^{(k)})-b_1(X^{(k)}_s,\mu_s^{(k)})\}\d
s\\
&\quad+\int_0^t\e^{-\ll s}
V'_\vv(Z_s^{(k+1)})\{b_1(X^{k}_s,\mu_s^{(k)})-b_1(X^{(k)}_s,\mu_s^{(k-1)})\}\d
s\\
&\quad+\int_0^t \e^{-\ll s} |
V'_\vv(Z_s^{(k+1)})|\cdot|b_2(X^{(k+1)}_s,\mu_s^{(k)})-b_2(X^{(k)}_s,\mu_s^{(k-1)})|\d
s\\
&\le 2K_1\int_0^t\e^{-\ll s}
\{|Z_s^{(k+1)}|+\mathbb{W}_1(\mu_s^{(k)},\mu_s^{(k-1)})\}\d s,
\end{split}
\end{equation}
where the first integral in the first inequality was dropped since,
for fixed $\mu\in\scr P(\R)$, $x\mapsto b_1(x,\mu)$ is
non-increasing. Next, by utilizing   ({\bf H2}) and \eqref{R2} with
$\gg=\e^{\ff{1}{\vv}}$ and using $\aa\in[1/2,1]$, we infer that
\begin{equation}\label{E4}
\begin{split}
| I_{3,\vv}^\ll(t)| \le\ff{K_2^2\vv}{2} \int_0^t\e^{-\ll s}
|Z_s^{(k+1)}|^{2\aa-1}{\bf1}_{[\ff{\vv}{\e^{1/\vv}},\vv]}(|Z_s^{(k+1)}|)
\d s   \le  \ff{1}{2}c_\ll K_2^2t\vv,
\end{split}
\end{equation}
where $c_\ll:=
\{1{\bf1}_{\{\ll=0\}}+\ff{1}{\ll}{\bf1}_{\{\ll>0\}}\}.$
So, taking advantage of \eqref{R1}, \eqref{E3} as well as \eqref{E4}
leads to
\begin{equation}\label{E7}
\begin{split}
  \e^{-\ll t}|Z_t^{(k+1)}|&\le 2(1+ (2\ll+c_\ll K_2^2)t)\vv -
(\ll-2K_1)\int_0^t\e^{-\ll s}
 |Z_s^{(k+1)}| \d
s\\
&\quad+2K_1\int_0^t \e^{-\ll s}\E|Z_s^{(k)}| \d s+I_{4,\vv}^\ll(t).
\end{split}
\end{equation}
 By \eqref{R1} and \eqref{EE0}, we have $\E I_{4,\vv}^\ll(t)=0$.
Whence,    choosing   $\ll=0$,  approaching $\vv\downarrow0$, and
employing Gronwall's inequality gives
\begin{equation}\label{W11}
 \E|Z_t^{(k+1)}|\le
 2K_1\e^{2K_1t}\int_0^t  \E|Z_s^{(k)}| \d
s.
\end{equation}
For notation simplicity, set
\begin{equation*}
|[Z^{(k)}]|_{\ll,t}:=\sup_{0\le s\le t}\Big(\e^{-\ll
s}\E|Z^{(k)}_s|\Big),~~~~\|[Z^{(k)}]\|_{\ll,t}:=\E\Big(\sup_{0\le
s\le t}(\e^{-\ll s}|Z^{(k)}_s|)\Big).
\end{equation*}
In the sequel,   we take $\ll\ge2K_1\e^{1+2K_1T}$ and let
$t\in[0,T].$ In terms of \eqref{W11}, it follows that
\begin{equation}\label{E0}
|[Z^{(k+1)}]|_{\ll,t}
\le \e^{-1} |[Z^{(k)}]|_{\ll,t}\le\e^{-k}|[Z^{(1)}]|_{\ll,T}.
\end{equation}
Subsequently, by invoking BDG's inequality, Jensen's inequality  and
\eqref{R1} and taking ({\bf H2}) into account followed by setting
$\vv\downarrow0$, we deduce from \eqref{E7} and $\aa\in[1/2,1]$ that
\begin{equation*}\label{eq2}
\begin{split}
\|[Z^{(k+1)}]\|_{\ll,t}  &\le2K_1 \int_0^t |[Z^{(k)}]|_{\ll,s} \d
s+{\bf 1}_{\{\aa=\ff{1}{2}\}}4\ss2K_2 \Big(\int_0^t
|[Z^{(k+1)}]|_{\ll,s} \d
s\Big)^{\frac{1}{2}}\\
&\quad+{\bf1}_{\{\aa\in(\ff{1}{2},1]\}}\Big\{\ff{1}{2}
\|[Z^{(k+1)}]\|_{\ll,t}+16K_2^2
\int_0^t|[Z^{(k+1)}]|_{\ll,s}^{2\aa-1}\d s\Big\}.
\end{split}\end{equation*}
This, in addition to \eqref{E0}, implies that there exists a
constant $C_T>0$ such that
\begin{equation*}
\begin{split}
\|[Z^{(k+1)}]\|_{\ll,t} \le C_T\,
\e^{- (\ff{1}{2}{\bf
1}_{\ff{1}{2}\{\aa=\ff{1}{2}\}}+(2\aa-1){\bf1}_{\{\aa\in(\ff{1}{2},1]\}})
k}.
\end{split}\end{equation*}
As a result, there exists an $\F_t$-adapted continuous stochastic
process $(X_t)_{t\in [0,T]}$ with $X_0=\xi$ and $\mu_t=\L_{X_t}$ such that
\begin{align}\label{cov}
\lim_{k\to\8}\sup_{t\in[0,T]}\mathbb{W}_1(\mu_t^{(k)},\mu_t)\le\lim_{k\to\infty}
\E\|X^{(k)}- X\|_{\8,t} =0.
\end{align}
From {\bf(H1)}, we infer that
\begin{equation*}\begin{split}
 \int_0^t |b(X_s^{(k)}, \mu_s^{(k-1)}) -b(X_s, \mu_s) |\d s
&\leq  \int_0^t |b_1(X_s^{(k)}, \mu_s) -  b_1(X_s,
\mu_s)| \d s \\
&\quad +2K_1\int_0^t\{|X_s^{(k)}-X_s|+\W_1(\mu_s^{(k-1)},\mu_s)\}\d
s.
\end{split}\end{equation*}
 By \eqref{b1},  and the continuity of $b_1(\cdot,\mu)$ for any $\mu\in \scr P_1(\R)$, we can apply \eqref{cov} and dominated convergence theorem to  obtain
\begin{equation}\label{M2}
\lim_{k\to\infty} \int_0^T \E|b(X_t^{(k)}, \mu_t^{(k-1)})  - b(X_t,
\mu_t)| \d t =0.
\end{equation} Again, from \eqref{cov} we find
that \begin{equation}\label{M1} \lim_{k\to\8}\E\Big(\sup_{0\le t\le
T}\Big|\int_0^t (\si(X_s^{(k)})  - \si(X_s)) \d W_s\Big|\Big)=0,
\end{equation} since, by
 {\bf(H2)}, BDG's inequality and Jensen's inequality, we have
\begin{equation*}
\begin{split}&\E\Big(\sup_{0\le t\le T}\Big|\int_0^t (\si(X_s^{(k)})  -
\si(X_s)) \d W_s\Big|\Big)\\&\leq
4\ss2K_2~\E\Big(\int_0^{T}|X_t^{(k)}-X_t|^{2\alpha} \d
t\Big)^{\frac{1}{2}}\\
&\le
{\bf1}_{\{\aa=\ff{1}{2}\}}4\ss2K_2\Big(\int_0^{T}\E|X_t^{(k)}-X_t|
\d t\Big)^{\frac{1}{2}}\\
&\quad+{\bf1}_{\{\aa\in(\ff{1}{2},1]\}}\Big\{\E\|X^{(k)}-X\|_{\8,T}+16K_2^2\int_0^T\E|X_t^{(k)}-X_t|^{2\aa-1}\d
t\Big\}.
\end{split}\end{equation*}
Now with \eqref{M2} and \eqref{M1} in hand, by   taking $k\to\infty$
in the equation
$$X_t^{(k)}= \xi+\int_0^t b(X_s^{(k)}, \mu_s^{(k-1)}) \d s +\int_0^t \si(X_s^{(k)}) \d W_s,\ \ k\ge 1, t\in [0,T],$$
  we derive (by extracting a suitable subsequence) $\P$-a.s.
\begin{equation*}
\d X_t=    b(X_t, \mu_t) \d t + \si(X_t) \d W_t, \ \ t\in
[0,T],
\end{equation*}
so that the existence of solution to \eqref{E1} is now available.

 Next, we prove the uniqueness of \eqref{E1}. To this end, we assume
that $(X_t^{1,\xi})_{t\ge0}$ and $(X_t^{2,\xi})_{t\ge0}$ are
solutions to \eqref{E1} with the same initial value $\xi.$ For
$\GG_t:=X_t^{1,\xi}-X_t^{2,\xi},$   by following the argument to
derive \eqref{W11},  one has
\begin{equation*}
 \E|\GG_t|\le
 2K_1\e^{2K_1t}\int_0^t  \E|\GG_s| \d
s,
\end{equation*}
which, by invoking  Gronwall's inequality and Remark \ref{Rem1},
yields  the uniqueness. Finally, we show that the $p$-th moment of
the solution process is uniformly bounded in a finite time interval.
Indeed, from \eqref{b1} and \eqref{EQ1}, there exists a constant $c
>0$ such that
\begin{equation}\label{W0}
|b(x,\mu)\le c\, \{1+|x|+\mathbb{W}_1(\mu,\dd_0)\}~~~\mbox{ and
}~~~|\si(x)|\le c\, (1+| x|),~~~~\mu\in\scr P_1(\R), \ \ x\in\R.
\end{equation}
Let $(X_t)_{t\ge0}$ be a  solution to \eqref{E1} with
$X_0=\xi\in\scr P_p(\R)$ for some $p\geq 2$. For any $n\geq 1$,  set
$\tau_{n}:=\inf\{t\geq 0, |X_t|\geq n\}$. Therefore, applying
H\"older's inequality and BDG's inequality yields
\begin{equation*}
\begin{split}
 \E\Big(\sup_{0\le s\le t\wedge \tau_n}|X_s|^p\Big)&\le 3^{p-1}\Big\{\E|\xi|^p+t^{p-1}\E\int_0^{t\wedge \tau _n}|b(X_s,\mu_s)|^p\d s+c_p\E\Big(\int_0^{t\wedge\tau_n}|\si(X_s)|^2\d
 s\Big)^{p/2}\Big\}\\
 &\le\ff{1}{2} \E\Big(\sup_{0\le s\le
 t\wedge\tau_n}|X_s|^p\Big)+C_t\int_0^t\E\Big(\sup_{0\le r\le
 s\wedge\tau_n}|X_r|^p\Big)\d s\\
 &+3^{p-1} \E|\xi|^p+C_t\int_0^t(\E|X_s|)^p\d s
\end{split}
\end{equation*}
for some positive increasing function $t\mapsto C_t$. Thus,
Gronwall's inequality and Remark \ref{Rem1} yield $\E\Big(\sup_{0\le
s\le t\wedge \tau_n}|X_s|^p\Big)<C_T(1+\E|\xi|^p)$ for some constant $C_T>0$ and for any $t\in[0,T]$, which
implies $\lim_{n\to\infty}\tau_n=\infty$. So, \eqref{eq3} holds by
Fatou's lemma.

\end{proof}

\section{Proof of Theorem \ref{th2}}\label{sec2}
In this section, we intend to finish the proof of Theorem \ref{th2}.
Before we start, we prepare some auxiliary materials.
 The lemma below address the wellposedness of the stochastic  $N$-interacting
 particle systems \eqref{eq4}.

\begin{lem}\label{lem}
 Assume that {\bf(H1)}   and {\bf(H2)}
hold. Then \eqref{eq4} admits a strong solution with
$$\sup_{i\in\mathcal {S}_N}\E\Big(\sup_{0\le t \le
T}|X_t^{i,N}|\Big)<\infty.$$
\end{lem}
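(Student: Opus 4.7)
The plan is to adapt the Picard-type Yamada-Watanabe approximation argument from the proof of Theorem \ref{Existence} to the $N$-dimensional coupled system \eqref{eq4}. Viewing \eqref{eq4} as an SDE on $\R^N$ whose $i$-th drift is $b(x^i,\ff{1}{N}\sum_{j}\dd_{x^j})$ and whose diffusion is the diagonal matrix with entries $\si(x^i)$, both coefficients are continuous in the state and of at most linear growth under {\bf(H1)}-{\bf(H2)}.

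First I would construct an approximating sequence by freezing the empirical measure: set $X_t^{i,N,(0)}\equiv X_0^i$ and, given $(X^{j,N,(k)})_{j\in\mathcal S_N}$, let $\hat\mu_t^{(k)}=\ff{1}{N}\sum_{j=1}^N\dd_{X_t^{j,N,(k)}}$ and define $X^{i,N,(k+1)}$ as the strong solution of the decoupled one-dimensional SDE
\[
\d X_t^{i,N,(k+1)}=b(X_t^{i,N,(k+1)},\hat\mu_t^{(k)})\d t+\si(X_t^{i,N,(k+1)})\d W_t^i,\quad X_0^{i,N,(k+1)}=X_0^i.
\]
With $\hat\mu_t^{(k)}$ frozen, $b_1(\cdot,\hat\mu_t^{(k)})$ is continuous, non-increasing and $\bb$-H\"older in the state, $b_2(\cdot,\hat\mu_t^{(k)})$ is Lipschitz, and $\si$ is $\aa$-H\"older with $\aa\in[1/2,1]$; the classical Yamada-Watanabe theorem (\cite[Theorem 3.2]{IW}) thus delivers a unique strong solution for every $(i,k)$. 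A BDG-plus-Gronwall argument built on the linear-growth bound \eqref{W0} and iterated in $k$ then yields $\sup_{k\ge 0,\,i\in\mathcal S_N}\E\sup_{t\le T}|X_t^{i,N,(k)}|^p<\8$ provided $\E|X_0|^p<\8$.

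The convergence step reproduces the calculation leading to \eqref{E7}. With $Z_t^{i,(k+1)}:=X_t^{i,N,(k+1)}-X_t^{i,N,(k)}$, apply It\^o's formula to $V_\vv(Z_t^{i,(k+1)})$, discard the non-positive $b_1$-space-difference by monotonicity, and control the $b_1$-measure increment, the $b_2$-increment and the diffusion increment exactly as in \eqref{E3}-\eqref{E4} with $\gg=\e^{1/\vv}$. The new ingredient compared to Theorem \ref{Existence} is the empirical-measure estimate
\[
\W_1(\hat\mu_t^{(k)},\hat\mu_t^{(k-1)})\le\ff{1}{N}\sum_{j=1}^N|Z_t^{j,(k)}|,
\]
so that taking $\vv\downarrow 0$ and summing the resulting inequalities over $i$ produces a Gronwall-type bound on $S_t^{(k)}:=\sum_{i=1}^N\E|Z_t^{i,(k)}|$ of the form $S^{(k+1)}_t\le C\int_0^t S^{(k)}_s\d s$, whose iteration gives geometric decay in $k$. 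Upgrading to the supremum norm through BDG combined with an exponential weight, as in \eqref{E0}, makes $(X^{i,N,(k)})_k$ Cauchy in $\E\|\cdot\|_{\8,T}$ for every $i$; its limit $X^{i,N}$ is continuous and adapted, and $\sup_{t\le T}\W_1(\hat\mu_t^{(k)},\hat\mu_t)\to 0$ in $L^1$. The dominated-convergence argument from \eqref{M2}-\eqref{M1} then lets us pass to the limit in the defining SDE, producing a strong solution of \eqref{eq4}; the moment bound follows from the uniform-in-$k$ estimate above and Fatou's lemma.

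The only genuinely delicate point is controlling the coupling between coordinates: since the Wasserstein distance between two empirical measures is dominated only by an average of pathwise distances, the Yamada-Watanabe estimates must be performed simultaneously for all $N$ particles and then summed so that the geometric contraction in $k$ survives. The H\"older diffusion is treated identically to the proof of Theorem \ref{Existence} via $V_\vv$, and no other new analytic difficulty arises.
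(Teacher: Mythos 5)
Your route is genuinely different from the paper's and has a gap. The paper does not iterate at all: it lifts \eqref{eq4} to a single $\R^N$-valued SDE $\d X_t=\hat b(X_t)\d t+\hat\si(X_t)\d\hat W_t$ with $\hat b(x)_i=b(x_i,\tt\mu^N_x)$ and $\hat\si(x)=\mbox{diag}(\si(x_1),\dots,\si(x_N))$, observes via \eqref{W3} and the H\"older hypotheses that $\hat b,\hat\si$ are continuous and of linear growth so that a weak solution exists, proves pathwise uniqueness by the $V_\vv$-comparison coordinate-wise together with \eqref{W3}, and then invokes the Yamada--Watanabe principle (weak existence plus pathwise uniqueness yields a unique strong solution). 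You instead replay the Picard scheme of Theorem \ref{Existence} by freezing the empirical measure.

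The gap is in the iteration step. In Theorem \ref{Existence} the frozen measure $\mu_t^{(k-1)}=\L_{X_t^{(k-1)}}$ is a deterministic flow, so each frozen SDE has deterministic time-dependent coefficients and \cite[Theorem 3.2]{IW} applies directly. In your scheme the frozen quantity $\hat\mu_t^{(k)}=\ff{1}{N}\sum_{j}\dd_{X_t^{j,N,(k)}}$ is a \emph{random} $\F_t$-adapted process (random already at $k=0$, and genuinely path-dependent for $k\ge1$). The frozen SDE for $X^{i,N,(k+1)}$ therefore has random progressively measurable drift $b(\cdot,\hat\mu_t^{(k)}(\omega))$, and \cite[Theorem 3.2]{IW} --- stated for deterministic coefficients --- does not apply as cited. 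Pathwise uniqueness does survive the randomness (the $V_\vv$-argument is pathwise), but the weak-existence half of the Yamada--Watanabe logic would need a random-coefficient analogue or a careful conditioning argument, neither of which you supply. The paper's lift to $\R^N$ avoids this entirely, since $\hat b,\hat\si$ are deterministic functions of the state. Once the existence of the frozen iterates is secured, the remainder of your argument --- summing the $V_\vv$-estimates over $i$ to get geometric contraction in $k$, BDG to upgrade to a running supremum, and Fatou for the moment bound --- is in good shape and matches the paper's estimates.
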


\begin{proof}
For $x:=(x_1,\cdots,x_N)^*\in\R^N$,   $x_i\in\R$, set
\begin{equation*}
\begin{split}
\tt\mu^N_x:=\ff{1}{N}\sum_{i=1}^N\dd_{x_i}, ~~~\hat b(x):&=(b(x_1,\tt\mu^N_x),\cdots,b(x_N,\tt\mu^N_x))^*,~~~\\
 \hat\si(x):=\mbox{diag}(\si(x_1),\cdots,\si(x_N)),~~~~\hat W_t:&=(W_t^1,\cdots,W_t^N)^*.
\end{split}
\end{equation*}
Obviously, $(\hat W_t)_{t\ge0}$ is an $N$-dimensional Brownian
motion. Then, \eqref{eq4} can be reformulated as
\begin{equation}\label{B1}
\d X_t=\hat b(X_t)\d t+\hat\si(X_t)\d \hat W_t,~~~t\ge0.
\end{equation}
By Yamada-Watanabe theorem (see e.g. \cite{IW}), to show \eqref{B1}
has a unique strong solution, it is sufficient to verify that
\eqref{B1} possesses a weak solution and that it is pathwise unique.
By \eqref{b1}, \eqref{EQ1} and ({\bf H2}), a straightforward
calculation shows that
\begin{equation}\label{B2}
\begin{split}
|\hat b(x)|+\|\hat\si(x)\|_{\rm HS}\le
\left(\sum_{i=1}^N|b(x_i,\tt\mu^N_x)|^2\right)^{\frac{1}{2}}+\Big(\sum_{i=1}^N\si(x_i)^2\Big)^{1/2}\le
C_{N}(1+|x|),~~~x\in\R^N
\end{split}
\end{equation}
for some constant $C_{N}>0$,  that is, both $\hat b$ and $\hat\si$
are at most of linear growth. Observe that
\begin{equation}\label{W4}\ff{1}{N}\sum_{j=1}^N(\dd_{x_j}\times\dd_{y_j})\in\mathcal
{C}(\tt\mu^N_x,\tt\mu^N_y),~~~~x_j,y_j\in\R,\end{equation} so that we
have
\begin{equation}\label{W3}
\mathbb{W}_1(\tt\mu^N_x,\tt\mu^N_y)\le
\ff{1}{N}\sum_{j=1}^N|x_j-y_j|.
\end{equation}
This, together with \eqref{b1} and \eqref{EQ1}, besides ({\bf H2}),
implies that
\begin{equation}\label{B3}
\begin{split}
|\hat b(x)-\hat b(x')|\le \hat C_{N}\{|x-x'|+|x-x'|^\bb\},~~~\|\hat
\si(x)-\hat \si(x')\|_{\rm HS}\le \hat C_{N}|x-x'|^\aa
\end{split}
\end{equation}
for some constant $\hat C_{N}>0$ so that $\hat b$ and $\hat\si$ are
continuous . Consequently, \eqref{B2} and \eqref{B3} yields that
\eqref{B1} enjoys a weak solution. Moreover, by carrying out a
similar  argument to derive \eqref{W11}, we can infer that
\eqref{eq4} is pathwise unique. As a result, we reach a conclusion
that \eqref{eq4} has a unique strong solution.
\end{proof}

The following lemma reveals the phenomenon upon propagation of chaos
and provides the corresponding convergence rate.

\begin{lem}\label{pro1}
Under the assumptions of Theorem \ref{th2},   for any $T>0$, there
exists a constant $C_T>0$ such that
\begin{equation}\label{W2}
\sup_{i\in\mathcal {S}_N}\E\Big(\sup_{0\le t\le
T}|X_t^i-X_t^{i,N}|\Big)\le C_T\Big\{{\bf
1}_{\{\aa=\ff{1}{2}\}}N^{-\ff{1}{8}}+{\bf
1}_{\{\aa\in(\ff{1}{2},1]\}}N^{-\ff{2\aa-1}{4}}\Big\}
\end{equation}
and
\begin{equation}\label{W7}
\sup_{i\in\mathcal {S}_N}\E\Big(\sup_{0\le t\le
T}|X_t^i-X_t^{i,N}|^2\Big)\le C_T N^{-\ff{1}{4}}.
\end{equation}
\end{lem}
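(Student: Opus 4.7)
The plan is to transplant the Yamada--Watanabe argument used in the proof of Theorem~\ref{Existence} onto the discrepancy $Y_t^i:=X_t^i-X_t^{i,N}$, treating $\W_1(\mu_t,\hat\mu_t^N)$ as the analogue of the distribution-iteration gap appearing there. An extra symmetrization step is then needed to pull the measure distance back to the state-space distance.

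First I apply It\^o's formula to $V_{\vv}(Y_t^i)$ with $V_{\vv}:=V_{\e^{1/\vv},\vv}$, exactly as in the existence proof. Decomposing $b=b_1+b_2$, the monotonicity of $b_1(\cdot,\mu)$ kills the piece $V_{\vv}'(Y_s^i)[b_1(X_s^i,\mu_s)-b_1(X_s^{i,N},\mu_s)]$, and {\bf(H1)} together with $|V_{\vv}'|\le 1$ bounds what remains by $K_1|Y_s^i|+2K_1\W_1(\mu_s,\hat\mu_s^N)$. By {\bf(H2)} and \eqref{R2}, the It\^o correction $\tfrac12 V_{\vv}''(Y_s^i)(\si(X_s^i)-\si(X_s^{i,N}))^2$ is bounded on the support $[\vv\e^{-1/\vv},\vv]$ of $V_{\vv}''$ by $CK_2^2\vv^{2\aa}$, where the cases $\aa=\tfrac12$ and $\aa>\tfrac12$ fuse into the common exponent $2\aa$ thanks to the choice $\gg=\e^{1/\vv}$. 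Taking expectations eliminates the stochastic integral.

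The decisive step is to control $\W_1(\mu_s,\hat\mu_s^N)$ by triangulating through the empirical measure $\tt\mu_s^N$ of the independent copies. Using the canonical coupling as in \eqref{W4}, one has
\[
\W_1(\mu_s,\hat\mu_s^N)\le \W_1(\mu_s,\tt\mu_s^N)+\ff{1}{N}\sum_{j=1}^N|Y_s^j|,
\]
and exchangeability of the system yields $\E\bigl[\ff{1}{N}\sum_j|Y_s^j|\bigr]=\E|Y_s^i|$, which is what makes the Gronwall loop close. The leading empirical error is handled by a one-dimensional Glivenko--Cantelli estimate, e.g.~\cite[Theorem~5.8]{CD}: since \eqref{eq3} supplies $\sup_{s\le T}\mu_s(|\cdot|^p)<\8$ with $p>4>2$, one obtains $\sup_{s\le T}\E\W_1(\mu_s,\tt\mu_s^N)\le C_TN^{-1/2}$. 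Sending $\vv\downarrow 0$ and applying Gronwall then produces the pointwise estimate $\E|Y_t^i|\le C_TN^{-1/2}$.

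To promote this to the sup-in-time form \eqref{W2}, I apply BDG to the martingale in the It\^o expansion, obtaining $\E\sup_{t\le T}|M_t|\le CK_2\,\E\bigl(\int_0^T|Y_s^i|^{2\aa}\d s\bigr)^{1/2}$, and then feed in the pointwise bound: the case $\aa=\tfrac12$ is handled via a direct Cauchy--Schwarz on $\E(\int|Y_s^i|\d s)^{1/2}$, while for $\aa\in(\tfrac12,1]$ I interpolate $|Y|^{2\aa}=|Y|^{2\aa-1}|Y|$ against the $L^\8$-moment bound \eqref{eq3} and the $L^1$ pointwise estimate. For the $L^2$ form \eqref{W7} I instead apply It\^o to $|Y_t^i|^2$; the bracket $(\si(X^i)-\si(X^{i,N}))^2\le K_2^2|Y|^{2\aa}$ is dominated via $|y|^{2\aa}\le|y|+|y|^2$ combined with the pointwise $L^1$ bound, while the measure term is handled by $\W_1^2(\tt\mu^N,\hat\mu^N)\le\ff{1}{N}\sum_j|Y^j|^2$ plus the $\W_1^2$ Glivenko--Cantelli rate, for which the assumption $p>4$ is used through Fournier--Guillin-type estimates.

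The hardest piece of bookkeeping will be to arrange the self-referential $\E|Y_s^i|$ arising from exchangeability so that it remains absorbable by Gronwall, and to balance the Yamada--Watanabe parameter $\vv$ against the H\"older diffusion error $\vv^{2\aa}$ so that the borderline case $\aa=\tfrac12$ survives the limit $\vv\downarrow 0$ without collapsing.
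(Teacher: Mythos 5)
Your proposal is correct and follows essentially the same route as the paper: It\^o's formula with the Yamada--Watanabe function $V_{\e^{1/\vv},\vv}$, monotonicity of $b_1$ to discard the singular drift increment, triangulation of $\W_1(\mu_s,\hat\mu_s^N)$ through the empirical measure $\tt\mu_s^N$ of the independent copies plus exchangeability to close the Gronwall loop, and BDG with the case split at $\aa=\ff12$ for the sup-in-time and $L^2$ versions. The only real difference is the intermediate Glivenko--Cantelli rate: you invoke the one-dimensional bound $\E\W_1(\mu_s,\tt\mu_s^N)\le C_TN^{-1/2}$ (valid by Fournier--Guillin-type results, though not what \cite[Theorem 5.8]{CD} directly states), whereas the paper only uses $\E\W_1\le(\E\W_2^2)^{1/2}\le C N^{-1/4}$ from that theorem; your sharper rate would in fact yield slightly better exponents than those asserted in \eqref{W2} and \eqref{W7}.
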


\begin{proof}
In what follows, we let $i\in\mathcal {S}_N$ and set
$Z^{i,N}_t:=X_t^i-X_t^{i,N}$. First of all, we are going to claim
that there exists a constant $C_T>0$ such that
\begin{equation}\label{W1}
\sup_{0\le t\le T}\E|Z^{i,N}_t|\le C_TN^{-\ff{1}{4}}.
\end{equation}
 Applying It\^o's formula to
$V_\vv:=V_{\e^{\ff{1}{\vv}},\vv}$ yields
\begin{equation}\label{eq7}
\begin{split}
\d V_\vv(Z^{i,N}_t)&=
 \{V_\vv'(Z^{i,N}_t)(b(X_t^i,\mu_t^i)-b(X_t^{i,N},\hat\mu_t^N))\\
 &\quad+\ff{1}{2}V_\vv''(Z^{i,N}_t)(\si(X_t^i)-\si(X_t^{i,N}))^2 \}\d
t +\d M_t^{i,N},
\end{split}
\end{equation}
where
$$\d M_t^{i,N}:= V_\vv'(Z^{i,N}_t)(\si(X_t^i)-\si(X_t^{i,N}))\d W_t^i.$$
Due to $X_0^{i}=X_0^{i,N}$, we henceforth obtain from
\eqref{b1}, \eqref{EQ1}, \eqref{R1} and \eqref{R2} with
$\gg=\e^{\ff{1}{\vv}}$ that
\begin{equation}\label{w1}
|Z^{i,N}_t|\le \vv+C_1\int_0^t\Big\{
|Z^{i,N}_s|+\mathbb{W}_1(\mu_s^i,\hat\mu_s^N)+\vv^{2\aa}\Big\}\d
s+M_t^{i,N}
\end{equation}
for some constant $C_1>0 $. So, by taking $\vv\downarrow0$ and
utilizing the triangle inequality for $\mathbb{W}_1$, one obtains
that
\begin{equation*}
\E|Z^{i,N}_t|\le
C_1\int_0^t\{\E|Z^{i,N}_s|+\E\mathbb{W}_1(\mu_s^i,\tt\mu_s^N)+\E\mathbb{W}_1(\tt\mu_s^N,\hat\mu_s^N)\}\d
s,
\end{equation*}
where $\tt\mu^N$ was introduced in \eqref{H1}. In terms of
\cite[Theorem 5.8]{CD}, there exists a constant $C_2>0$ such that
\begin{equation}\label{w2}
\E\mathbb{W}_1(\mu_t^i,\tt\mu_t^N)\le C_2 N^{-1/4}.
\end{equation}
As a consequence, by exploiting \eqref{W3} and \eqref{w2}, we derive
that
\begin{equation*}
\begin{split}
\E|Z^{i,N}_t|&\le
C_1\int_0^t\Big\{\E|Z^{i,N}_s|+\ff{1}{N}\sum_{j=1}^N\E|X_s^j-X_s^{j,N}|+C_2
N^{-1/4}\Big\}\d
s\\
&\le C_3\int_0^t \{\E|Z^{i,N}_s|+   N^{-1/4}\}\d s
\end{split}
\end{equation*}
for some constant $C_3>0,$ where in the last display we used the
fact that $(Z^{j,N})_{1\le j\le N}$ are identically distributed.
Subsequently, by employing Gronwall's inequality,  \eqref{W1} is
available.

Next, by  BDG's inequality and Jensen's inequality, we derive from
\eqref{b1} and \eqref{EQ1} that there exist constants $C_4,C_5>0$
such that
\begin{equation*}
\begin{split}
\E\Big(\sup_{0\le s\le t}|Z^{i,N}_s|\Big)&\le C_4\int_0^t
\{\E|Z^{i,N}_s|+   N^{-1/4}\}\d
s+C_4\E\Big(\int_0^t|Z^{i,N}_s|^{2\aa}\d s\Big)^{1/2}\\
&\le C_4\int_0^t \{\E|Z^{i,N}_s|+   N^{-1/4}\}\d
s+C_4{\bf1}_{\{\aa=1/2\}}\Big(\int_0^t\E|Z^{i,N}_s|\d s\Big)^{1/2}\\
&\quad+{\bf1}_{\{\aa\in(1/2,1]\}}\Big\{\ff{1}{2}\E\Big(\sup_{0\le
s\le t}|Z^{i,N}_s|\Big)+C_5
 \int_0^t(\E|Z^{i,N}_s|)^{2\aa-1}\d s\Big\}.
\end{split}
\end{equation*}
As a result, \eqref{W2} follows from
\eqref{W1}.

Again, by applying H\"older's inequality and  BDG's inequality,  it
follows from ({\bf H2}) and \eqref{w1} that there exists a constant $C_6>0$ such
that
\begin{equation*}
\begin{split}
\E\Big(\sup_{0\le s\le t}|Z^{i,N}_s|^2\Big)
&\le
C_6t\int_0^t\Big\{\E|Z^{i,N}_s|^2+\E\mathbb{W}_1(\mu_s^i,\hat\mu_s^N)^2
\Big\}\d s+C_6\int_0^t\E|Z^{i,N}_s|^{2\aa}\d s.
\end{split}
\end{equation*}
Owing to \eqref{W4}, we have
\begin{equation}\label{S4}
\mathbb{W}_2\Big(\ff{1}{N}\sum_{j=1}^N\dd_{x_j},\ff{1}{N}\sum_{j=1}^N\dd_{y_j}\Big)^2\le
\ff{1}{N}\sum_{j=1}^N|x_j-y_j|^2,~~~x_j,y_j\in\R.
\end{equation}
Whence, it follows that
\begin{equation}\label{W5}
\E\mathbb{W}_2(\tt\mu_t^N,\hat\mu_t^N)^2\le \ff{1}{N}\sum_{j=1}^N\E
|Z^{j,N}_t|^2=\E |Z^{i,N}_t|^2
\end{equation}
by taking the fact that  $(Z^{j,N})_{1\le j\le N}$ are identically
distributed into consideration. Moreover, according to \cite[Theorem
5.8]{CD}, there exists a constant $C_7>0$ such that
\begin{equation}\label{W6}
\E\mathbb{W}_2(\mu_s^i,\tt\mu_s^N)^2\le C_7 N^{-1/2}.
\end{equation}
Thus, combining \eqref{W5} with \eqref{W6} and employing Young's
inequality, we infer that
\begin{equation*}
\begin{split}
\E\Big(\sup_{0\le s\le t}|Z^{i,N}_s|^2\Big)&\le
C_8t\int_0^t\Big\{\E|Z^{i,N}_s|^2+\E\mathbb{W}_2(\mu_s^i,\tt\mu_s^N)^2+\E\mathbb{W}_2(\tt\mu_s^N,\hat\mu_s^N)^2
\Big\}\d s\\
&\quad+C_6\int_0^t\E|Z^{i,N}_s|^{2\aa}\d s\\
&\le C_9t\int_0^t\{\E|Z^{i,N}_s|^2+N^{-1/2}\}\d s+{\bf
1}_{\{\aa=\ff{1}{2}\}}C_6\int_0^t\E|Z^{i,N}_s|\d s\\
&\quad+{\bf
1}_{\{\aa\in(1/2,1]\}}C_6\int_0^t\Big\{2(1-\aa)\E|Z^{i,N}_s|+(2\aa-1)\E|Z^{i,N}_s|^2
\Big\}\d
s\\
\end{split}
\end{equation*}
for some constants $C_8,C_9>0.$ Finally, \eqref{W7} holds true from \eqref{W1}.
\end{proof}

The lemma below demonstrates the convergence rate of the continuous
time EM scheme associated with \eqref{eq4}.

\begin{lem}\label{pro2}
Under the assumptions of Lemma \ref{lem}, then, for any $T>0$, there
exists a constant $C_T>0$ such that 
\begin{equation}\label{F6}
\sup_{i\in\mathcal {S}_N}\E\Big(\sup_{0\le t\le
T}|X^{i,N}_t-X^{\dd,i,N}_t|\Big)\le
 C_T\Big\{{\bf1}_{\{\aa=\ff{1}{2}\}} \Big(\ff{1}{ \ln\ff{1}{\dd}
}\Big)^{\ff{1}{2}}+{\bf1}_{\{\aa\in(\ff{1}{2},1]\}}\Big(\dd^{\ff{(2\aa-1)^2}{2}}+\dd^{\ff{\bb(2\aa-1)}{2}}\Big)\Big\},
\end{equation}
and
\begin{equation}\label{F7}
\begin{split}
\sup_{i\in\mathcal {S}_N}&\E\Big(\sup_{0\le t\le
T}|X^{i,N}_t-X^{\dd,i,N}_t|^2\Big)\\
&\le C_T\Big\{{\bf1}_{\{\aa=\ff{1}{2}\}}  \ff{1}{ \ln\ff{1}{\dd} }
+{\bf1}_{\{\aa\in(\ff{1}{2},1)\}}(\dd^{2\aa-1})+{\bf1}_{\{\aa=1\}}(\dd+\dd^\bb)\Big\}.
\end{split}
\end{equation}

\end{lem}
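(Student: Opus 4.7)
The plan is to follow the Yamada--Watanabe strategy of Lemma \ref{pro1}, but now comparing the interacting particle $X^{i,N}$ against its Euler--Maruyama discretization $X^{\dd,i,N}$. Set $Z_t := X_t^{i,N} - X_t^{\dd,i,N}$ and the one-step increment $\Delta_t^i := X_t^{\dd,i,N} - X_{t_\dd}^{\dd,i,N}$. The preparatory ingredient is the standard EM one-step bound $\sup_i\E|\Delta_t^i|^q \le C_T\dd^{q/2}$ for $q\in[1,2]$, which follows from the linear growth of $b,\si$ (cf.\ \eqref{W0}) together with BDG's inequality and Lemma \ref{lem}. Combined with $\W_1(\hat\mu_t^N,\hat\mu_{t_\dd}^{\dd,N}) \le \ff{1}{N}\sum_j|\Delta_t^j|$ and particle exchangeability, this reduces the empirical Wasserstein gap to the scalar quantity $\E|\Delta_t^1|$.

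The core step applies It\^{o}'s formula to $V_\vv(Z_t)$ with $V_\vv = V_{\e^{1/\vv},\vv}$. For the drift, interpolate the $b_1$-difference through $(X_t^{\dd,i,N},\hat\mu_t^N)$ and $(X_{t_\dd}^{\dd,i,N},\hat\mu_t^N)$: the spatial-only piece at fixed measure is killed by the non-increasing property of $b_1(\cdot,\mu)$ and the sign-alignment of $V_\vv'$, as in Theorem \ref{Existence}; the two remaining pieces are bounded by $K_1|\Delta_t^i|^\bb$ and $K_1\W_1(\hat\mu_t^N,\hat\mu_{t_\dd}^{\dd,N})$, respectively, and $b_2$ is handled by its full Lipschitz property. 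For the diffusion, use the split
$$(\si(X_t^{i,N})-\si(X_{t_\dd}^{\dd,i,N}))^2 \le 2K_2^2\{|Z_t|^{2\aa}+|\Delta_t^i|^{2\aa}\}.$$
Together with \eqref{R2} and $\ln\e^{1/\vv}=1/\vv$, the $|Z_t|^{2\aa}$-piece yields an $O(\vv^{2\aa})$ contribution exactly as in \eqref{E4}, while the $|\Delta_t^i|^{2\aa}$-piece is controlled using the compact support of $V_\vv''$ together with $\E|\Delta_t^i|^{2\aa}\le C\dd^\aa$. Taking expectations, invoking BDG for the supremum and Gronwall, one arrives at a bound $\E\sup_{s\le t}|Z_s| \le C_T[\vv + \vv^{2\aa} + \dd^{\bb/2} + \Psi(\vv,\dd)]$; optimizing $\vv$ against $\dd$ produces \eqref{F6} --- a polynomial balance for $\aa\in(\ff{1}{2},1]$, and a logarithmic balance $\vv\asymp 1/\ss{\ln(1/\dd)}$ for $\aa=\ff{1}{2}$, where the degeneration $\vv^{2\aa}=\vv$ forces the rate $(\ln(1/\dd))^{-1/2}$.

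For \eqref{F7}, I would apply It\^{o} to $|Z_t|^2$ directly. The drift again uses monotonicity of $b_1$ plus Young's inequality, absorbing $|Z_t|(|\Delta_t^i|^\bb + \W_1)$ into $C\E|Z_t|^2 + C(\dd^\bb + \dd)$. The H\"{o}lder diffusion yields $2K_2^2(\E|Z_t|^{2\aa}+\E|\Delta_t^i|^{2\aa}) \le C\E|Z_t|^{2\aa}+C\dd^\aa$. For $\aa=1$, the $|Z_t|^{2\aa}$-term folds directly into Gronwall, producing the $\dd+\dd^\bb$ rate; for $\aa\in(\ff{1}{2},1)$, Jensen gives $\E|Z_t|^{2\aa}\le(\E|Z_t|^2)^\aa$ and a sublinear Gronwall comparison, combined with the $L^1$ estimate \eqref{F6}, delivers the $\dd^{2\aa-1}+\dd^\bb$ bound; the borderline case $\aa=\ff{1}{2}$ instead requires repeating the Yamada--Watanabe step on $|Z|^2$ to produce the $1/\ln(1/\dd)$ rate. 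The main obstacle is the delicate parameter optimization in the $L^1$ analysis: controlling the $V_\vv''(Z)|\Delta|^{2\aa}$ contribution requires balancing the narrow support width $\vv\e^{-1/\vv}$ of $V_\vv''$ against $\E|\Delta|^{2\aa}\le C\dd^\aa$, and only a careful choice of $\vv$ recovers the sharp exponents $(2\aa-1)^2/2$ and $\bb(2\aa-1)/2$ claimed in \eqref{F6}.
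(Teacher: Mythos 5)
Your overall architecture (It\^o's formula on the Yamada--Watanabe function applied to $Z_t=X_t^{i,N}-X_t^{\dd,i,N}$, interpolation of the $b_1$-difference through $(X_t^{\dd,i,N},\hat\mu_t^N)$ so that monotonicity kills the spatial piece, the one-step bound $\E|\Delta_t^i|^q\le C\dd^{q/2}$, and the reduction of $\W_1(\hat\mu_t^N,\hat\mu_{t_\dd}^{\dd,N})$ by exchangeability) matches the paper. But there is a genuine gap in the step you yourself flag as ``the main obstacle'': you freeze $\gamma=\e^{1/\vv}$ and assert that a careful choice of the single parameter $\vv$ recovers the polynomial exponents for $\aa\in(\ff12,1]$. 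It cannot. On the support of $V''_{\gg,\vv}$ one has $|x|\ge\vv/\gg$, so $V''_{\gg,\vv}\le 2\gg/(\vv\ln\gg)$, which with $\gg=\e^{1/\vv}$ equals $2\e^{1/\vv}$; the discretization contribution $\E[V''_\vv(Z_s)|\Delta_s^i|^{2\aa}]$ is then of order $\e^{1/\vv}\dd^{\aa}$. Making this vanish forces $\vv\ge c/\ln(1/\dd)$, while the unavoidable error $V_\vv(x)\ge|x|-\vv$ contributes $+\vv$. Hence within your one-parameter family every admissible choice of $\vv$ caps the rate at $O(1/\ln(1/\dd))$ for \emph{all} $\aa$, and the claimed rates $\dd^{(2\aa-1)^2/2}+\dd^{\bb(2\aa-1)/2}$ in \eqref{F6} (and the polynomial rates in \eqref{F7}) are unreachable. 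This is exactly why the paper switches notation from $V_\vv$ to the two-parameter $V_{\gg,\vv}$ in this lemma: it keeps $\gg$ free, obtaining the intermediate bound $\E|Z_t|\le C\{\vv+\vv^{2\aa-1}/\ln\gg+\gg\dd^{\aa}/(\vv\ln\gg)+\dd^{1/2}+\dd^{\bb/2}\}$, and then chooses $(\vv,\gg)=(\ss\dd,\,2)$ for $\aa\in(\ff12,1]$ --- so that $V''\le 4/(\vv\ln 2)$ and the offending term becomes $\dd^{\aa}/\vv=\dd^{\aa-1/2}$ while the $|Z|^{2\aa}$ term gives $\vv^{2\aa-1}/\ln 2$ --- versus $(\vv,\gg)=(1/\ln(1/\dd),\,(1/\dd)^{1/3})$ for $\aa=\ff12$. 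Decoupling $\gg$ from $\vv$ is the missing idea, not a refinement of your optimization.

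A secondary, smaller divergence: for \eqref{F7} you propose It\^o on $|Z_t|^2$ directly with a Bihari-type comparison for the sublinear term $\E|Z_t|^{2\aa}$. Note that the ODE $f'\le Cf^{\aa}+C\dd^{\aa\wedge\bb}$ with $f(0)=0$ does \emph{not} yield $f\to0$ as $\dd\to0$, so the sublinear Gronwall alone is insufficient; you must, as the paper does, first convert $\E|Z_s|^{2\aa}$ via Young's inequality into $2(1-\aa)\E|Z_s|+(2\aa-1)\E|Z_s|^2$ and feed in the already-established first-moment bound before applying the ordinary (linear) Gronwall inequality. With the corrected two-parameter choice of $(\gg,\vv)$ this part goes through as you intend.
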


\begin{proof}
For $i\in\mathcal {S}_N$, let
$Z^{\dd,i,N}_t=X^{i,N}_t-X^{\dd,i,N}_t$ and
$\Lambda_t^{\dd,i,N}=X^{\dd,i,N}_t-X^{\dd,i,N}_{t_\dd}$. By using
H\"older's inequality and BDG's inequality, for any $q>0$, we obtain
from \eqref{W0} that there exists $\hat C_{T,q}>0$ such that
\begin{equation}\label{W00}
\sup_{0\le t\le T}\E|\Lambda_t^{\dd,i,N}|^q\le \hat
C_{T,q}\dd^{q/2}.
\end{equation}
Below, by It\^o's formula, it follows that
\begin{equation*}
\begin{split}
\d V_{\gamma,\vv}(Z^{\dd,i,N}_t)&=
 \{V_{\gamma,\vv}'(Z^{\dd,i,N}_t)(b(X_t^{i,N},\hat\mu_t^N)-b(X_{t_\dd}^{\dd,i,N},\hat\mu_{t_\dd}^{\dd,N}))\\
 &\quad+\ff{1}{2}V_{\gamma,\vv}''(Z^{\dd,i,N}_t)(\si(X_t^{i,N})-\si(X_{t_\dd}^{\dd,i,N}))^2\}\d
t +\d \hat M_t^{i,N},
\end{split}
\end{equation*}
where
\begin{equation*}
\d \hat
M_t^{i,N}:=V_{\gamma,\vv}'(Z^{\dd,i,N}_t)(\si(X_t^{i,N})-\si(X_{t_\dd}^{\dd,i,N}))\d
W_t^i.
\end{equation*}
Then, combining ({\bf H1}) with ({\bf H2}) and taking advantage of
\eqref{R1}, \eqref{R2} as well as \eqref{W00} gives
%
\begin{equation*}\label{F1}
\begin{split}
\E|Z^{\dd,i,N}_t|&\le \varepsilon+ c_1\int_0^t\E\Big\{|Z_s^{\dd,i,N}|+|\Lambda_s^{\dd,i,N}|+|\Lambda_s^{\dd,i,N}|^\bb+\mathbb{W}_1(\hat\mu_t^N,\hat\mu_{t_\dd}^{\dd,N})\\
&\quad+\ff{1}{|Z_s^{\dd,i,N}|\ln\gg}{\bf1}_{[\vv/\gg,\vv]}(|Z_s^{\dd,i,N}|)(|Z_s^{\dd,i,N}|^{2\aa}+|\Lambda_s^{\dd,i,N}|^{2\aa})\Big\}\d s\\
&\le C_{1,T}\Big\{ \vv+\ff{\vv^{2\aa-1}}{\ln\gg} +\ff{
\gg}{\vv\ln\gg}\dd^{\aa}+ \dd^{\ff{1}{2}}+\dd^{\ff{\bb}{2}}+\int_0^t
\E|Z_s^{\dd,i,N}| \d s\Big\}
\end{split}
\end{equation*}
for some constants  $c_1,C_{1,T}>0$, where we  also utilized
\begin{equation*}
\E\mathbb{W}_1(\hat\mu_t^N,\hat\mu_{t_\dd}^{\dd,N})\le
\E|\Lambda_t^{\dd,i,N}|+\E|Z_t^{\dd,i,N}|.
\end{equation*}
Thus, Gronwall's inequality yields
\begin{equation}
\begin{split}
\E|Z^{\dd,i,N}_t|\le C_{2,T}\Big\{ \vv+\ff{\vv^{2\aa-1}}{\ln\gg}
+\ff{ \gg}{\vv\ln\gg}\dd^{\aa}+
\dd^{\ff{1}{2}}+\dd^{\ff{\bb}{2}}\Big\}
\end{split}
\end{equation}
for some constant $C_{2,T}>0.$ Furthermore, by virtue of BDG's
equality and Jensen's inequality, we deduce from ({\bf H1}), ({\bf
H2}), \eqref{R1}, and \eqref{R2} that
\begin{equation}\label{F2}
\begin{split}
\E\Big(\sup_{0\le s\le t}|Z^{\dd,i,N}_s|\Big)&\le C_{2,T}\Big\{
\vv+\ff{\vv^{2\aa-1}}{\ln\gg} +\ff{ \gg}{\vv\ln\gg}\dd^{\aa}+
\dd^{\ff{1}{2}}+\dd^{\ff{\bb}{2}}\Big\}\\
&\quad+{\bf1}_{\{\aa=1/2\}}c_1\Big(\int_0^t(\E|Z^{\dd,i,N}_s|+\E|\Lambda^{\dd,i,N}_{s}|)\d s\Big)^{1/2}\\
&\quad+{\bf1}_{\{\aa\in(1/2,1]\}}\Big\{\ff{1}{2}\E\Big(\sup_{0\le
s\le t}|Z^{\dd,i,N}_s|\Big)+c_2
 \int_0^t(\E|Z^{\dd,i,N}_s|)^{2\aa-1}\d s\\
 &\quad+c_2\left(\int_0^t\E|\Lambda^{\dd,i,N}_s|^{2\aa}\d s\right)^{\frac{1}{2}}\Big\}
\end{split}
\end{equation}
and that
\begin{equation}\label{F5}
\begin{split}
\E\Big(\sup_{0\le s\le t}|Z^{\dd,i,N}_s|^2\Big)&\le C_{3,T}\Big\{
\vv+\ff{\vv^{2\aa-1}}{\ln\gg} +\ff{ \gg}{\vv\ln\gg}\dd^{\aa}+
\dd^{\ff{1}{2}}+\dd^{\ff{\bb}{2}}\Big\}^2\\
&\quad+{\bf1}_{\{\aa=1/2\}} c_3\int_0^t(\E|Z^{\dd,i,N}_s|+\E|\Lambda^{\dd,i,N}_{s}|)\d s \\
&\quad+{\bf1}_{\{\aa\in(1/2,1]\}}c_4\Big\{
 \int_0^t(\E|Z^{\dd,i,N}_s|+\E|Z^{\dd,i,N}_s|^2) \d s\\
 &\quad+\Big(\int_0^t\E|\Lambda^{\dd,i,N}_s|^{2\aa} \d
 s\Big)^{\ff{1}{2}}\Big\}.
\end{split}
\end{equation}
Consequently, the desired assertions \eqref{F6} and \eqref{F7}
follows from \eqref{F2} and \eqref{F5} and by taking
$\vv=\ff{1}{\ln\ff{1}{\dd}}$ and $\gg=(1/\dd)^{\ff{1}{3}}$ and
$\vv=\ss\dd$ and $\gg=2$, respectively, for $\aa=\ff{1}{2}$ and
$\aa\in(1/2,1].$
\end{proof}

Now with the help of Lemmas \ref{pro1} and Lemma \ref{pro2}, we  complete
directly the proof of Theorem \ref{th2}.

\section{Proof of Theorem \ref{th3}}

The proof of Theorem \ref{th3} is based on two  lemmas below, where
the first one is concerned with  propagation of chaos for
McKean-Vlasov SDEs with irregular drift coefficients.

\begin{lem}\label{pro3}
Under the assumptions of Theorem \ref{th3},  for any $T>0$, there
exists a constant $C_T>0$ such that
\begin{equation}\label{S1}
\sup_{i\in\mathcal {S}_N}\E\Big(\sup_{0\le t\le
T}|X_t^i-X_t^{i,N}|^2\Big)\le C_T
\begin{cases}
N^{-\ff{1}{2}},~~~~~~~~~~~~~d<4\\
N^{-\ff{1}{2}}\log N,~~~~ ~~d=4\\
N^{-\ff{2}{d}},~~~~~~~~~~~~~ d>4.
\end{cases}
\end{equation}
\end{lem}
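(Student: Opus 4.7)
The plan is to combine Zvonkin's transformation, applied after freezing the law $\mu_t=\scr L_{X_t}$, with the Glivenko--Cantelli-type Wasserstein bound for empirical measures of i.i.d.\ samples. First I would set $b_t^\mu(x):=b(x,\mu_t)$. By (A1), $\si$ is bounded with bounded derivative and $\si\si^*$ is uniformly elliptic; by (A2), $b_t^\mu$ is bounded and H\"older-$\aa$ in $x$ uniformly in $t$. Classical parabolic Schauder theory then provides, for every $\ll>0$, a unique solution $u\in C_b^{1,2}([0,T]\times\R^d;\R^d)$ of
\begin{equation*}
\pp_t u+\tfrac12\,\mathrm{tr}(\si\si^*\nn^2 u)+(\nn u)\,b_t^\mu+b_t^\mu=\ll u,\qquad u(T,\cdot)=0,
\end{equation*}
with uniform bounds on $\|u\|_\8,\|\nn u\|_\8,\|\nn^2 u\|_\8$; for $\ll$ large enough, $\|\nn u\|_\8\le 1/2$, so that $\Theta(t,x):=x+u(t,x)$ is a bi-Lipschitz $C^{1,2}$-diffeomorphism on $\R^d$.

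Next, applying It\^o's formula to $\Theta(t,X_t^i)$ absorbs the singular H\"older drift into the $\ll u$ term, giving the Lipschitz-in-$x$ SDE
\begin{equation*}
\d\Theta(t,X_t^i)=\ll u(t,X_t^i)\,\d t+\bigl[(I+\nn u)\si\bigr](t,X_t^i)\,\d W_t^i,
\end{equation*}
while for $X_t^{i,N}$ the drift is $b(\cdot,\hat\mu_t^N)$ rather than $b(\cdot,\mu_t)$, so
\begin{equation*}
\d\Theta(t,X_t^{i,N})=\bigl\{\ll u+(I+\nn u)\bigl[b(\cdot,\hat\mu_t^N)-b(\cdot,\mu_t)\bigr]\bigr\}(t,X_t^{i,N})\,\d t+\bigl[(I+\nn u)\si\bigr](t,X_t^{i,N})\,\d W_t^i.
\end{equation*}
By (A2) one has $|b(y,\hat\mu_t^N)-b(y,\mu_t)|\le K\W_1(\hat\mu_t^N,\mu_t)$. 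Subtracting the two displays, applying It\^o for $|\cdot|^2$, and using BDG together with the bi-Lipschitz property of $\Theta$, I would obtain
\begin{equation*}
\E\sup_{0\le s\le t}|X_s^i-X_s^{i,N}|^2\le C\int_0^t\E|X_s^i-X_s^{i,N}|^2\,\d s+C\int_0^t\E\W_1(\hat\mu_s^N,\mu_s)^2\,\d s.
\end{equation*}

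To control the last integrand I would split through the i.i.d.\ empirical $\tt\mu_s^N$ from \eqref{H1}, writing
\begin{equation*}
\E\W_1(\hat\mu_s^N,\mu_s)^2\le 2\,\E\W_2(\tt\mu_s^N,\hat\mu_s^N)^2+2\,\E\W_2(\mu_s,\tt\mu_s^N)^2.
\end{equation*}
The first term is bounded by $\E|X_s^i-X_s^{i,N}|^2$ via \eqref{S4} together with the exchangeability of $(X^{j,N})_{j\in\mathcal S_N}$; the second is the standard Fournier--Guillin / \cite[Theorem 5.8]{CD} rate, which (using $\scr L_{X_0}\in\scr P_p(\R^d)$ with $p>4$ and the propagation of the $p$-th moment to $X_s^i$ under (A1)) produces precisely the three-regime bounds $N^{-1/2},\,N^{-1/2}\log N,\,N^{-2/d}$ for $d<4,\,d=4,\,d>4$. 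Gronwall's inequality then closes the argument and yields \eqref{S1}.

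The main obstacle is the quantitative Zvonkin estimate: establishing the $C_b^{1,2}$-solvability of the PDE uniformly in the frozen law $\mu$, with $\|\nn u\|_\8\le 1/2$ so that $\Theta$ is bi-Lipschitz. This is exactly where the nondegeneracy and $C_b^1$-regularity of $\si$ from (A1) combined with the H\"older bound on $b$ from (A2) pay off, and is the point at which a direct H\"older-in-$x$ Gronwall approach (analogous to Lemma \ref{pro1}) would fail because of the sub-linear $|x-y|^\aa$ growth in (A2).
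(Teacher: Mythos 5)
Your proposal is correct and follows essentially the same route as the paper: freeze the law $\mu_t^i$, invoke the Zvonkin PDE \eqref{A1} (the paper cites \cite[Lemma 2.1]{BHY} where you invoke Schauder theory, but the content is the same) to obtain $\theta^{\ll,\mu^i}_t=\mathrm{id}+u^{\ll,\mu^i}_t$ with $\|\nn u\|_\8+\|\nn^2u\|_\8\le 1/2$, transform both particle systems, apply BDG and the bi-Lipschitz property of $\theta$, split $\E\W(\hat\mu^N_s,\mu^i_s)^2$ through $\tt\mu^N_s$ using \eqref{S4} and exchangeability, invoke \cite[Theorem 5.8]{CD} for \eqref{G1}, and close with Gronwall. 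The only cosmetic difference is that the paper runs Gronwall on $\Lambda^{\ll,i,N}_t=\theta^{\ll,\mu^i}_t(X_t^i)-\theta^{\ll,\mu^i}_t(X_t^{i,N})$ and converts to $Z_t^{i,N}$ at the end, whereas you state the Gronwall inequality directly in terms of $Z$; this is equivalent given the uniform bound on $\nn u$.
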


\begin{proof}
For any $i\in\mathcal {S}_N$ and $x\in\R^d$, let
$b_t^{\mu^i}(x)=b(x,\mu_t^i)$ and
$b_t^{\hat\mu^N}=b(x,\hat\mu_t^N)$. Then, \eqref{C4} and \eqref{eq4}
can be rewritten respectively  as
\begin{equation*}
\begin{split}
\d X_t^i&=b_t^{\mu^i}(X_t^i)\d t+\si(X_t^i)\d W_t^i\\
\d X_t^{i,N}&=b_t^{\hat\mu^N}(X_t^{i,N})\d t+\si(X_t^{i,N})\d W_t^i.
\end{split}
\end{equation*}
For  $\ll>0 $ and $\mu^i_\cdot\in C([0,T];\mathscr{P}(\R^d))$,
consider the following PDE for
$u^{\ll,\mu^i}:[0,T]\times\R^d\to\R^d$:
\begin{equation}\label{A1}
\partial_tu^{\ll,\mu^i}_t+\ff{1}{2}\sum_{k,j=1}^d\<\si\si^*\e_j,e_l\>\nn_{e_j}\nn_{e_l}u^{\ll,\mu^i}_t+\nn_{b_t^{\mu^i}}u^{\ll,\mu^i}_t+
b_t^{\mu^i}=\ll u^{\ll,\mu^i}_t,~~~u^{\ll,\mu^i}_T=0,
\end{equation}
where $(e_j)_{1\le j\le d}$ stands for the orthogonal basis of
$\R^d.$ By recurring to  \cite[Lemma 2.1]{BHY}, for $\ll>0$  large
enough, \eqref{A1} has a unique solution $u^{\ll,\mu^i} $ with
\begin{equation}\label{A2}
\|\nn u^{\ll,\mu^i}\|_\8+\|\nn^{ 2 } u^{\ll,\mu^i}\|_\8\le
\ff{1}{2},
\end{equation}
where $\|\cdot\|_\8$ means the uniform norm. Applying It\^o's
formula to $\theta^{\ll,\mu^i}_t(x):=x+u^{\ll,\mu^i}_t(x),
x\in\R^d,$ where $u^{\ll,\mu^i}$ solves \eqref{A1},    yields
\begin{equation}\label{S2}
\begin{split}
\d \theta^{\ll,\mu^i}_t(X_t^i)&=\ll u^{\ll,\mu^i}_t(X_t^i)\d
t+(\nn\theta^{\ll,\mu^i}_t\si)(X_t^i)\d W_t^i\\
\d
\theta^{\ll,\mu^i}_t(X_t^{i,N})
 &=\{\ll
u^{\ll,\mu^i}_t(X_t^{i,N})+\nn\theta^{\ll,\mu^i}_t(b_t^{\hat\mu^N}-b_t^{\mu^i})(X_t^{i,N})\}\d
t+(\nn\theta^{\ll,\mu^i}_t\si)(X_t^{i,N})\d W_t^i.
 \end{split}
\end{equation}
Henceforth, for
$\Lambda^{\ll,i,N}_t:=\theta^{\ll,\mu^i}_t(X_t^i)-\theta^{\ll,\mu^i}_t(X_t^{i,N})$,
we derive from  H\"older's inequality and BDG's inequality that
\begin{equation*}
\begin{split}
\E\Big(\sup_{0\le s\le t}|\Lambda^{\ll,i,N}_s|^2\Big)&\le
C_{1,\ll}t\Big\{\int_0^t\E |u^{\ll,\mu^i}_s(X_s^i)-u^{\ll,\mu^i}_s(X_s^{i,N})|^2\d s\\
&\quad+\int_0^t\E|(\nn\theta^{\ll,\mu^i}_s(b_s^{\hat\mu^N}-b_s^{\mu^i}))(X_s^{i,N})|^2\d s\Big\}\\
&\quad+\int_0^t\E\|(\nn\theta^{\ll,\mu^i}_s\si)(X_s^{i,N})-(\nn\theta^{\ll,\mu^i}_s\si)(X_s^{i})\|_{\rm
HS}^2 \d s\\
&=:C_{1,\ll}t\{I_{1,i}(t)+I_{2,i}(t)\}+I_{3,i}(t)
\end{split}
\end{equation*}
for some constant $C_{1,\ll}>0.$ Set $Z_t^{i,N}:=X_t^i-X_t^{i,N}$
for convenience. By means of \eqref{A2}, one has
\begin{equation}\label{A3}
I_{1,i}(t)\le C_1\int_0^t\E|Z_s^{i,N}|^2\d s.
\end{equation}
for some constant $C_1>0$. Next, via \eqref{C1}  and \eqref{A2}, in
addition to  \eqref{W5}, it follows from the triangle inequality
that
\begin{equation}\label{A4}
\begin{split}
I_{2,i}(t)&\le
C_2\int_0^t\{\E\mathbb{W}_2(\hat\mu^N_s,\tt\mu^N_s)^2+\E\mathbb{W}_2(\tt\mu^N_s,\mu^i_s)^2\}\d
s\\
&\le
C_2\int_0^t\{\E|Z_s^{i,N}|^2+\E\mathbb{W}_2(\tt\mu^N_s,\mu^i_s)^2\}\d
s
\end{split}
\end{equation}
for some constant $C_2>0.$ Furthermore, owing to \eqref{A5} and
\eqref{A2}, we obtain that for some constant $C_3>0,$
\begin{equation}\label{A55}
\begin{split}
I_{3,i}(t)&\le2\int_0^t\E\|
\nn\theta^{\ll,\mu^i}_s(X_s^{i,N})(\si(X_s^{i,N})-\si
(X_s^{i}))\|_{\rm HS}^2 \d s\\
&\quad+2\int_0^t\E\|(\nn\theta^{\ll,\mu^i}_s(X_s^{i,N})-\nn\theta^{\ll,\mu^i}_s(X_s^{i}))\si
(X_s^{i})\|_{\rm HS}^2\d s\\
&\le C_3\int_0^t\E|Z_s^{i,N}|^2\d s.
\end{split}
\end{equation}
Thus, with the aid of \eqref{A3}, \eqref{A4} and \eqref{A55}, we
find that for some constant $C_{2,\ll}>0,$
\begin{equation*}
\E\Big(\sup_{0\le s\le t}|\Lambda^{\ll,i,N}_s|^2\Big)\le
C_{2,\ll}(t+1)\int_0^t\{\E|Z_s^{i,N}|^2+\E\mathbb{W}_2(\tt\mu^N_s,\mu^i_s)^2\}\d
s.
\end{equation*}
This, together with the facts that $ |Z_t^{i,N}|^2\le
4|\Lambda^{\ll,i,N}_t|^2 $ due to \eqref{A2},  leads to
\begin{equation*}
\E\Big(\sup_{0\le s\le t}|Z_s^{i,N}|^2\Big)\le
C_{3,\ll}(t+1)\int_0^t\{\E|Z_s^{i,N}|^2+\E\mathbb{W}_2(\tt\mu^N_s,\mu^i_s)^2\}\d
s
\end{equation*}
for some constant $C_{3,\ll}>0$. Hence, the desired assertion
\eqref{S1} follows from Gronwall's inequality and the fact that
\begin{equation}\label{G1}\sup_{0\le t\le
T}\E\mathbb{W}_2(\tt\mu^N_t,\mu^i_t)^2\leq C_4
\begin{cases}
N^{-\frac{1}{2}}, ~~~~~~~~~~d<4 \\
N^{-\frac{1}{2}}\log N, ~~~  d=4
 \\
 N^{-\frac{2}{d}},
  ~~~~~~~~~~~d>4
 \end{cases}
\end{equation}
for some constant $C_4>0$; see, for instance, \cite[Theorem
5.8]{CD}.
\end{proof}

\begin{lem}\label{D1}
Under the assumptions of Theorem \ref{th3},  for any $T>0$, there
exists a constant $C_T>0$ such that
\begin{equation}\label{S6}
\sup_{i\in\mathcal {S}_N}\E\Big(\sup_{0\le t\le
T}|X_t^{i,N}-X_t^{\dd,i,N}|^2\Big)\le C_T
\begin{cases}
\dd^\aa+N^{-\ff{1}{2}},~~~~~~~~~~~~~d<4\\
\dd^\aa+N^{-\ff{1}{2}}\log N,~~~~ ~~d=4\\
\dd^\aa+N^{-\ff{2}{d}},~~~~~~~~~~~~~ d>4.
\end{cases}
\end{equation}

\end{lem}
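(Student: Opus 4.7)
The plan is to mimic Lemma \ref{pro3} via the same Zvonkin-type transformation, but now use it to compare the interacting particle $X_t^{i,N}$ with its Euler--Maruyama discretization $X_t^{\dd,i,N}$ at fixed $i\in\mathcal{S}_N$. For $\ll>0$ chosen large enough so that \eqref{A2} holds, let $u^{\ll,\mu^i}$ solve the PDE \eqref{A1} and set $\theta_t^{\ll,\mu^i}(x):=x+u_t^{\ll,\mu^i}(x)$. Applying It\^o's formula to $\theta_t^{\ll,\mu^i}(X_t^{i,N})$ reproduces the second identity in \eqref{S2}. Applying It\^o's formula to $\theta_t^{\ll,\mu^i}(X_t^{\dd,i,N})$ gives an analogous identity, except that the drift and diffusion are now evaluated at the grid-time point $X_{t_\dd}^{\dd,i,N}$; after adding and subtracting $b(X_t^{\dd,i,N},\mu_t^i)$ inside the first-order drift and $\si(X_t^{\dd,i,N})$ inside both the diffusion and the quadratic-variation terms, the PDE cancellation again produces the clean leading term $\ll u_t^{\ll,\mu^i}(X_t^{\dd,i,N})\,\d t$ plus four identifiable Euler-type error terms.

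I would then set $\Lambda^\dd_t:=\theta_t^{\ll,\mu^i}(X_t^{i,N})-\theta_t^{\ll,\mu^i}(X_t^{\dd,i,N})$, subtract the two It\^o expansions, square, take the supremum over $[0,t]$ and expectation, and apply BDG. The right-hand side naturally decomposes into five groups. First, the $\ll$-term $|u_t^{\ll,\mu^i}(X_t^{i,N})-u_t^{\ll,\mu^i}(X_t^{\dd,i,N})|^2\le\tfrac14|X_t^{i,N}-X_t^{\dd,i,N}|^2$ (via $\|\nn u\|_\8\le\tfrac12$) will be absorbed through Gronwall's inequality. Second, the Wasserstein-in-law contributions $\W_1(\hat\mu_t^N,\mu_t^i)^2$ and $\W_1(\hat\mu_{t_\dd}^{\dd,N},\mu_t^i)^2$ are handled by triangle inequality against $\tt\mu^N$ (or against $\hat\mu_t^N$ for the time-shift), followed by \eqref{W5}, Lemma \ref{pro3}, and the Glivenko--Cantelli estimate \eqref{G1}, producing the $d$-dependent rates $N^{-1/2}$, $N^{-1/2}\log N$, and $N^{-2/d}$ respectively. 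Third, the H\"older-in-space drift jump $|b(X_{t_\dd}^{\dd,i,N},\mu_t^i)-b(X_t^{\dd,i,N},\mu_t^i)|^2\le K^2|X_{t_\dd}^{\dd,i,N}-X_t^{\dd,i,N}|^{2\aa}$ is controlled by the basic Euler moment increment; since $b$ and $\si$ are uniformly bounded under ({\bf A1}), one has $\E|X_t^{\dd,i,N}-X_{t_\dd}^{\dd,i,N}|^q\le C_q\dd^{q/2}$ for every $q\ge2$, so this $L^2$ contribution is $O(\dd^\aa)$. Fourth, the diffusion-increment $\|\si(X_{t_\dd}^{\dd,i,N})-\si(X_t^{\dd,i,N})\|_{\rm HS}^2$ is globally Lipschitz thanks to $\|\nn\si\|_\8<\8$ and hence contributes $O(\dd)=O(\dd^\aa)$. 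Fifth, the martingale part $\int_0^t[\nn\theta_s^{\ll,\mu^i}(X_s^{i,N})\si(X_s^{i,N})-\nn\theta_s^{\ll,\mu^i}(X_s^{\dd,i,N})\si(X_{t_\dd}^{\dd,i,N})]\,\d W_s^i$ is estimated by BDG, \eqref{A2}, ({\bf A1}), and Young's inequality, yielding a Lipschitz piece in $|X^{i,N}-X^{\dd,i,N}|$ plus a further $O(\dd)$ term. Collecting everything produces an inequality of the form
\begin{equation*}
\E\Big(\sup_{0\le s\le t}|\Lambda^\dd_s|^2\Big)\le C_T\int_0^t\E\Big(\sup_{0\le r\le s}|X_r^{i,N}-X_r^{\dd,i,N}|^2\Big)\d s+C_T\big(\dd^\aa+\Theta_N(d)\big),
\end{equation*}
where $\Theta_N(d)$ denotes the $d$-dependent term on the right-hand side of \eqref{S1}. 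The pointwise bound $|X_t^{i,N}-X_t^{\dd,i,N}|^2\le 4|\Lambda^\dd_t|^2$ (again from $\|\nn u\|_\8\le\tfrac12$) together with Gronwall's inequality then delivers \eqref{S6}.

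The main obstacle I anticipate is the careful bookkeeping required to realise the PDE cancellation at discretized coefficients: one must insert the ``continuous'' drift $b(X_t^{\dd,i,N},\mu_t^i)$ and diffusion $\si(X_t^{\dd,i,N})$ by hand into the It\^o expansion of $\theta_t^{\ll,\mu^i}(X_t^{\dd,i,N})$, exploit \eqref{A1} to kill the second-order and $\nn u\cdot b$ contributions, and then route all remaining discretization errors into quantities whose $L^2$-size is controlled either by the Euler moment increment $\E|X_t^{\dd,i,N}-X_{t_\dd}^{\dd,i,N}|^q\le C_q\dd^{q/2}$ or by the Glivenko--Cantelli rate. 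The asymmetry between the H\"older-$\aa$ regularity of $b$ in the spatial variable and the full Lipschitz regularity of $\si$ coming from $\|\nn\si\|_\8<\8$ is precisely what forces $\dd^\aa$, rather than $\dd$, to be the dominant stepsize rate in \eqref{S6}.
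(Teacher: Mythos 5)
Your proposal is correct and follows essentially the same route as the paper: the same Zvonkin transformation $\theta_t^{\ll,\mu^i}=\mathrm{id}+u_t^{\ll,\mu^i}$ with \eqref{A1}--\eqref{A2}, the same insertion of $b(X_t^{\dd,i,N},\mu_t^i)$ and $\si(X_t^{\dd,i,N})$ to realise the PDE cancellation, the same decomposition of the remainder into a Gronwall-absorbable piece, a Wasserstein piece controlled via triangle inequality plus Lemma \ref{pro3} and \eqref{G1}, and $O(\dd^\aa)$ discretization pieces controlled by the Euler increment bound $\E|X_t^{\dd,i,N}-X_{t_\dd}^{\dd,i,N}|^q\le C_q\dd^{q/2}$, followed by $|Z_t^{\dd,i,N}|^2\le4|\Lambda_t^\dd|^2$ and Gronwall. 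The paper organizes the error into the labelled terms $J_1,\dots,J_4$ and spells out the intermediate estimate \eqref{H2}, but the substance is identical to your sketch.
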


\begin{proof}
Below we let $t\in[0,T]$.  For $x\in\R^d$ and $i\in\mathcal {S}_N$,
let $b^{\hat\mu^{\dd,N}}_{k\dd}(x)=b(x,\hat\mu^{\dd,N}_{k\dd})$ so
that \eqref{C3} can be reformulated as
\begin{equation*}
\d X_t^{\dd,i,N}=b^{\hat\mu^{\dd,N}}_{t_\dd}(X_{t_\dd}^{\dd,i,N})\d
t+\si(X_{t_\dd}^{\dd,i,N})\d W_t^i.
\end{equation*}
Applying It\^o's formula to
$\theta^{\ll,\mu^i}_t(x)=x+u^{\ll,\mu^i}_t(x)$ and taking the fact
that $u^{\ll,\mu^i}$ solves \eqref{A1} into consideration gives that
\begin{equation}\label{S3}
\begin{split}
\d \theta^{\ll,\mu^i}_t( X_t^{\dd,i,N})&=\Big\{\ll u^{\ll,\mu^i}_t(
X_t^{\dd,i,N})  +\nn\theta^{\ll,\mu^i}_t(
X_t^{\dd,i,N})(b^{\hat\mu^{\dd,N}}_{t_\dd}(X_{t_\dd}^{\dd,i,N})-b^{ \mu^i}_t(X_t^{\dd,i,N}))\\
&\quad
+\ff{1}{2}\sum_{k,j=1}^d\<((\si\si^*)(X_{t_\dd}^{\dd,i,N})-(\si\si^*)(X_t^{\dd,i,N}))\e_j,e_l\>\nn_{e_j}\nn_{e_l}u^{\ll,\mu^i}_t(
X_t^{\dd,i,N})\Big\}\d t\\
&\quad+\nn\theta^{\ll,\mu^i}_t(
X_t^{\dd,i,N})\si(X_{t_\dd}^{\dd,i,N})\d W_t^i.
\end{split}
\end{equation}
Set
\begin{equation*}
\Theta^{\ll,i,N}_t:=\theta^{\ll,\mu^i}_t(X_t^{i,N})-\theta^{\ll,\mu^i}_t(X_t^{\dd,i,N}),~~~~
 Z_t^{\dd,i,N}:=X_t^{i,N}-X_t^{\dd,i,N}.
\end{equation*}
Then,  from \eqref{S3} and    the second SDE in \eqref{S2}, we
deduce from H\"older's inequality and BDG's inequality   that
\begin{equation*}
\begin{split}
&\E\Big(\sup_{0\le s\le t} |\Theta^{\ll,i,N}_s|^2\Big)\\&\le
C_{1,\ll,d}(1+t)\Big\{\int_0^t\E|u^{\ll,\mu^i}_s(
X_s^{i,N})-u^{\ll,\mu^i}_s( X_s^{\dd,i,N}) |^2\d s\\
&\quad+\int_0^t\E|\nn\theta^{\ll,\mu^i}_s(b_s^{\hat\mu^N}-b_s^{\mu^i})(X_s^{i,N})-\nn\theta^{\ll,\mu^i}_s(
X_s^{\dd,i,N})(b^{\hat\mu^{\dd,N}}_{s_\dd}(X_{s_\dd}^{\dd,i,N})-b^{
\mu^i}_s(X_s^{\dd,i,N}))|^2\d s\\
&\quad+\sum_{k,j=1}^d\int_0^t
\E|\<((\si\si^*)(X_{s_\dd}^{\dd,i,N})-(\si\si^*)(X_s^{\dd,i,N}))\e_j,e_l\>\nn_{e_j}\nn_{e_l}u^{\ll,\mu^i}_s(
X_s^{\dd,i,N})|^2\d s\\
&\quad+\int_0^t\|(\nn\theta^{\ll,\mu^i}_s\si)(X_s^{i,N})-\nn\theta^{\ll,\mu^i}_s(
X_s^{\dd,i,N})\si(X_{s_\dd}^{\dd,i,N})\|_{\rm HS}^2\d s\Big\}\\
&=C_{\ll,d}(1+t)\{J_1(t)+J_2(t)+J_3(t)+J_4(t)\}
\end{split}
\end{equation*}
for some constant $C_{\ll,d}>0.$ In what follows, we intend to estimate
$J_i(t), i=1,2,3,4$, one-by-one. Owing to \eqref{A2}, there exists a
constant $c_1>0$ such that
\begin{equation}\label{L1}
J_1(t)\le c_1\int_0^t\E |Z_s^{\dd,i,N}|^2\d s.
\end{equation}
Next, thanks to \eqref{C1} and \eqref{A2}, it follows from
\eqref{S4} that
\begin{equation}\label{A10}
\begin{split}
J_2(t)&\le
c_2\int_0^t\{\E\mathbb{W}_2(\mu_s^i,\hat\mu^N_s)^2+\E|X_s^{\dd,i,N}-X_{s_\dd}^{\dd,i,N}|^{2\aa}+\E\mathbb{W}_2(\mu_s^i,\hat\mu^{\dd,N}_{s_\dd})^2\}\d
s\\
&\le
c_3\int_0^t\{\dd^\aa+\E\mathbb{W}_2(\mu_s^i,\tt\mu^N_s)^2+\E\mathbb{W}_2(\tt\mu^N_s,\hat\mu^N_s)^2
  +\E\mathbb{W}_2(\tt\mu^N_s,\hat\mu^{\dd,N}_{s_\dd})^2\}\d
s\\
\end{split}
\end{equation}
for some constants $c_2,c_3>0$, where we have used, for some
constant $c_{q,T}>0$,
\begin{equation}\label{S5}
\sup_{0\le t\le T}\E|X_t^{\dd,i,N}-X_{t_\dd}^{\dd,i,N}|^q\le
c_{q,T}\dd^{q/2},~~~~q>0,
\end{equation}
which can be obtained in a standard way under the assumption
\eqref{A5}. On the other hand,  by virtue of \eqref{S4} and
\eqref{S5}, we have for some $C_{1,T}>0,$
\begin{equation}\label{H2}
\begin{split}
\E\mathbb{W}_2(\tt\mu^N_t,\hat\mu^N_t)^2
  +\E\mathbb{W}_2(\tt\mu^N_t,\hat\mu^{\dd,N}_{t_\dd})^2&\le\ff{1}{N}
  \sum_{j=1}^N\{\E|X_t^j-X_t^{j,N}|^2+\E|X_t^j-X_{t_\dd}^{\dd,j,N}|^2\}\\
  &\le C_{1,T}\dd+\E|X_t^i-X_t^{i,N}|^2+2\E|X_t^i-X_t^{\dd,i,N}|^2,
\end{split}
\end{equation}
where in the last display we used the facts that
$(X^j-X^{j,N})_{j\in\mathcal {S}_N}$ and
$(X^j-X^{\dd,j,N})_{j\in\mathcal {S}_N}$ are identically
distributed. Then, plugging \eqref{H2} back into \eqref{A10} gives
that
\begin{equation}\label{L2}
J_2(t) \le
C_{2,T}\int_0^t\{\dd^\aa+\E\mathbb{W}_2(\mu_s^i,\tt\mu^N_s)^2+\E|X_s^i-X_s^{i,N}|^2+
\E|Z_s^{\dd,i,N}|^2\}\d s
\end{equation}
for some constant $C_{2,T}>0$. Furthermore, taking \eqref{A5},
\eqref{A2}, and \eqref{S5} into account, we find that there exists a
constant  $C_{3,T}>0$ such that
\begin{equation}\label{L3}
J_3(t)+J_4(t) \le C_{3,T}\int_0^t\{\dd+\E|Z_s^{\dd,i,N}|^2 \}\d s.
\end{equation}
Now, combining \eqref{L1}, \eqref{L2} with  \eqref{L3}, we arrive at
\begin{equation*}
\E\Big(\sup_{0\le s\le t} |\Theta^{\ll,i,N}_s|^2\Big)\le C_{4,T}
\int_0^t\{\dd^\aa+\E\mathbb{W}_2(\mu_s^i,\tt\mu^N_s)^2+\E|X_s^i-X_s^{i,N}|^2+
\E|Z_s^{\dd,i,N}|^2\}\d s
\end{equation*}
for some constant $C_{4,T}>0.$ This, together with $
|Z_t^{\dd,i,N}|^2\le 4|\Theta^{\ll,i,N}_t|^2 $ due to \eqref{A2},
yields
\begin{equation*}
\E\Big(\sup_{0\le s\le t} |Z_s^{\dd,i,N}|^2\Big)\le C_{5,T}
\int_0^t\{\dd^\aa+\E\mathbb{W}_2(\mu_s^i,\tt\mu^N_s)^2+\E|X_s^i-X_s^{i,N}|^2+
\E|Z_s^{\dd,i,N}|^2\}\d s
\end{equation*}
for some constant $C_{5,T>0}$. Consequently, the desired assertion
\eqref{S6} holds true by  applying Gronwall's inequality and
employing \eqref{S1} and \eqref{G1}.
\end{proof}

Now on the basis of Lemmas \ref{pro3} and \eqref{D1}, the proof of
Theorem \ref{th3} can be complete.

\end{document}